\theoremstyle{plain}%
\newtheorem{theorem}{Theorem}[section]
\newtheorem{corollary}{Corollary}[section]
\newtheorem{proposition}{Proposition}[section]
\newtheorem{lemma}{Lemma}[section]
\newtheorem{definition}{Definition}[section]
\newtheorem{remark}{Remark}[section]
\newtheorem{notation}{Notation}[section]
\newtheorem{example}{Example}[section]
\newfont{\hueca}{msbm10}
\def\hu #1{\hbox{\hueca #1}}\def\hu #1{\hbox{\hueca #1}}
\begin{document}

\title{On split Leibniz superalgebras}
\author{Antonio J. Calder\'{o}n Mart\'{\i }n
\thanks{Supported by the PCI of the
UCA `Teor\'\i a de Lie y Teor\'\i a de Espacios de Banach', by the
PAI with project numbers
 FQM298, FQM7156 and by the project of the Spanish Ministerio de Educaci\'on y Ciencia
 MTM2010-15223.}\\
Departamento de Matem\'{a}ticas. \\
Universidad de C\'{a}diz. 11510 Puerto Real, C\'{a}diz, Spain.\\
e-mail: ajesus.calderon@uca.es\\
\\
Jos\'{e} M.  S\'{a}nchez-Delgado \\
Departamento \'{A}lgebra, Geometr\'{i}a y Topolog\'{i}a.\\
Universidad de M\'alaga. 29071 M\'{a}laga, Spain.\\
e-mail: txema.sanchez@uma.es}
\date{}
\maketitle

\begin{abstract}
We study the structure of arbitrary split Leibniz superalgebras.
We show that any of such superalgebras ${\frak L}$ is of the form
${\frak L} = {\mathcal U} + \sum_jI_j$ with ${\mathcal U}$ a
subspace of an abelian (graded) subalgebra $H$ and any $I_j$ a
well described (graded) ideal of ${\frak L}$ satisfying $[I_j,I_k]
= 0$ if $j \neq k$. In the case of ${\frak L}$ being of maximal
length, the simplicity of ${\frak L}$ is also characterized in
terms of connections of roots.

\medskip

{\it Keywords}: Leibniz superalgebras, roots, root spaces,
structure theory.

{\it 2010 MSC}: 17A32,  17A70, 17B05.
\end{abstract}

\section{Introduction and previous definitions}



Throughout this paper, Leibniz superalgebras ${\frak L}$ are
considered of arbitrary dimension and over an arbitrary base field
${\hu K}$. It is worth to mention that, unless otherwise stated,
there is not any restriction on $\dim {\frak L}_{\alpha}$, the
products $[{\frak L}_{\alpha},{\frak L}_{-\alpha}]$, or $\{k \in
\hu{K}: k\alpha \in \Lambda\}$, where ${\frak L}_{\alpha}$ denotes
the root space associated to the root $\alpha$, and $\Lambda$ the
set of nonzero roots of ${\frak L}$. The class of Leibniz
superalgebras appears as an extension of the one of Leibniz
algebras (see \cite{1, 2, 3, 4, 7, 6, 8, 5}), in a similar way
that Lie superalgebras extends to Lie algebras, motivated in part
for its applications in physics. In fact,  the class of Leibniz
superalgebras also extends the one of Lie superalgebras by
removing the skew-supersymmetry, which is of interest in the
formalism of mechanics of Nambu \cite{Dale}.

In $\S2$ we develop techniques of connections of roots in the
framework of split Leibniz superalgebras so as to show that
${\frak L}$ is of the form  ${\frak L} = {\mathcal U} + \sum_jI_j$
with ${\mathcal U}$ a subspace of an  abelian (graded) subalgebra
$H$ and any $I_j$ a well described (graded) ideal of ${\frak L}$
satisfying $[I_j,I_k] = 0$ if $j \neq k$. In $\S 3$ we focuss on
those of maximal length and characterize the simplicity of this
class of superalgebras in terms of connections in the set of their
nonzero roots.

\medskip

\begin{definition}\rm
A {\it Leibniz superalgebra} ${\frak L}$ is a ${\hu Z}_2$-graded
algebra ${\frak L} = {\frak L}_{\bar 0} \oplus {\frak L}_{\bar 1}$
over an arbitrary base field ${\mathbb K}$, endowed with a
bilinear product $[\cdot, \cdot]$, whose homogenous elements $x
\in {\frak L}_{\bar i}, y \in {\frak L}_{\bar j}, {\bar i}, {\bar
j} \in {\hu Z}_2$, satisfy
$$[x,y] \subset {\frak L}_{{\bar i}+{\bar j}}$$
$$[x,[y,z]] = [[x,y],z] - (-1)^{{\bar j}{\bar k}}[[x,z],y] \hbox{
(Super Leibniz identity)}$$ for any homogenous element $z \in
{\frak L}_{\bar k}, {\bar k} \in {\hu Z}_2$.
\end{definition}

\medskip

\begin{remark}\rm
Note that Super Leibniz identity is considered by the {\it right
side} in the sense that the multiplication operators on the right
by elements in ${\frak L}_{\bar 0}$ are derivations on the
homogeneous elements. In this way is fixed in the references
\cite{1,2,3,4,5}. However, we could have considered a Super
Leibniz identity in which the multiplication operators on the left
by elements in ${\frak L}_{\bar 0}$ would act as derivations on
the homogeneous elements, as it is the case in the references
\cite{7,6,8}. Of course, the development of the present  job would
have been similar in this case.
\end{remark}

Clearly ${\frak L}_{\bar 0}$ is a Leibniz algebra. Moreover, if
the identity $[x,y]=-(-1)^{{\bar i}{\bar j}}[y,x]$ holds we have
that Super Leibniz identity becomes Super Jacobi identity and so
Leibniz superalgebras generalize both Leibniz algebras and Lie
superalgebras.

We also recall that the {\it center} of a Leibniz superalgebra
${\frak L}$ is the set
$${\mathcal Z}({\frak L}) := \{x \in {\frak L}:[x, {\frak L}]+[{\frak
L},x]=0\}.$$

The usual regularity concepts will be understood in the graded
sense. That is, a  {\it supersubalgebra} $A$ of ${\frak L}$ is a
graded subspace $A = A_{\bar 0} \oplus A_{\bar 1}$ satisfying
$[A,A] \subset A$; and an {\it ideal} $I$ of ${\frak L}$ is a
graded subspace $I = I_{\bar 0} \oplus I_{\bar 1}$ of ${\frak L}$
such that $$[I,{\frak L}] + [{\frak L},I] \subset I.$$

The (graded) ideal ${\frak I}$ generated by $$\{[x,y] +
(-1)^{\bar{i} \bar{j}}[y,x] : x \in {\frak L}_{\bar{i}}, y \in
{\frak L}_{\bar{j}}, \bar{i}, \bar{j} \in {\hu Z}_2\}$$ plays an
important role in the theory since it determines the (possible)
non-super Lie character of ${\frak L}$. From Super Leibniz
identity, it is straightforward to check that this ideal satisfies
\begin{equation}\label{equi}
[{\frak L},{\frak I}] = 0.
\end{equation}

Here we note that the usual definition of simple superalgebra lack
of interest in the case of Leibniz superalgebras because would
imply the ideal ${\frak I}={\frak L}$ or ${\frak I}={0}$, being so
${\frak L}$ an abelian (product zero)  or a Lie superalgebra
respectively (see Equation (\ref{equi})). Abdykassymova and
Dzhumadil'daev introduced in \cite{Abdy, Dzhu} an  adequate
definition in the case of Leibniz algebras $(L,[\cdot, \cdot])$
by calling simple to the ones satisfying that its only ideals are
$\{0\}$, $L$ and the one generated by the set $\{[x,x]: x \in
L\}$.  Following this vain, we consider the next definition.

\begin{definition}\label{Defsimple}\rm
A Leibniz  superalgebra ${\frak L}$ will be called {\it simple} if
$[{\frak L},{\frak L}] \neq 0$ and its only (graded) ideals are
$\{0\}, {\frak I}$ and ${\frak L}$.
\end{definition}

\medskip

Let us introduce the class of split algebras in the framework of
Leibniz superalgebras in a similar way to the cases of Lie
algebras, Leibniz algebras and Lie superalgebras, (see \cite{Yo1},
\cite{Yo3} and \cite{Yo2} respectively), among other classes of
algebras.

We begin by considering a maximal abelian graded subalgebra $H =
H_{\bar 0} \oplus H_{\bar 1}$ among the abelian graded subalgebras
of ${\frak L}$.

\begin{definition}\label{def2}\rm
Denote by $H = H_{\bar 0} \oplus H_{\bar 1}$ a maximal abelian
(graded) subalgebra  of a Leibniz superalgebra ${\frak L}$. For a
linear functional $$\alpha : H_{\bar 0} \longrightarrow \hu{K},$$
we define the {\it root space} of ${\frak L}$ associated to
$\alpha$ as the subspace
$${\frak L}_{\alpha} := \{v_{\alpha} \in {\frak L} : [v_{\alpha}, h_{\bar 0}] =
\alpha(h_{\bar 0})v_{\alpha} \hspace{0.2cm} {\it for}
\hspace{0.2cm} {\it any} \hspace{0.2cm} h_{\bar 0} \in H_{\bar
0}\}.$$ The elements $\alpha \in (H_{\bar 0})^*$ satisfying
${\frak L}_{\alpha} \neq 0$ are called {\it roots} of ${\frak L}$
respect to $H$ and we denote $\Lambda := \{\alpha \in (H_{\bar
0})^* \setminus \{0\}: {\frak L}_{\alpha} \neq 0\}$. We say that
${\frak L}$ is a {\it split Leibniz superalgebra} if
$${\frak L} = H \oplus (\bigoplus\limits_{\alpha \in \Lambda}{\frak L}_{\alpha}).$$
We also say that $\Lambda$ is the {\it root system} of ${\frak
L}$.
\end{definition}

Split Lie algebras, split Leibniz algebras and split Lie
superalgebras are examples of split Leibniz superalgebras. Hence,
the present paper extends the results in \cite{Yo1}, \cite{Yo2}
and \cite{Yo3}. We also have to note that the results in the above
references are given for split Lie algebras, split Leibniz
algebras and split Lie superalgebras with a symmetric root system,
that is, satisfying that if $\alpha$ is a nonzero root then
$-\alpha$  it is also. However, our results   are developed for
split Leibniz superalgebras with a non-necessarily symmetric root
system. Hence, the results in the present paper not only extend
the previous ones for the classes of split Lie algebras, split
Leibniz algebras and split Lie superalgebras to a more general
class of algebras, (the one of split Leibniz superalgebras), but
also extend the results on these classes of algebras given in
\cite{Yo1,Yo2,Yo3} to split Lie algebras, split Leibniz algebras
and split Lie superalgebras with a non-necessarily symmetric root
system.

We would finally like  to mention the works of Albeverio, Ayupov,
Gago, Ladra, Liu, Omirov and Turdibaev in the finite-dimensional
setup, where Cartan subalgebras, roots and root spaces of Leibniz
and $n$-Leibniz algebras are studied, \cite{Omirov1,  Omirov3,
Omirov2, Omirov1.5}; and also to the papers \cite{laa1, laa2} in
the framework of Leibniz algebras.

\begin{example}\label{example1}\rm
Consider the 5-dimensional ${\mathbb Z}_2$-graded vector space
${\frak L}={\frak L}_{\bar 0} \oplus {\frak L}_{\bar 1}$, over a
base field ${\mathbb K}$ of characteristic different from 2, with
basis $\{u_1,u_2,u_3\}$ of ${\frak L}_{\bar 0}$ and  $\{e_1,e_2\}$
of ${\frak L}_{\bar 1};$   where the products on these elements
are given  by:
$$[u_2,u_1]=-u_3, \hspace{0.3cm} [u_1,u_2]=u_3, \hspace{0.3cm}
[u_1,u_3]=-2u_1,$$ $$[u_3,u_1]=2u_1,\hspace{0.3cm}
[u_3,u_2]=-2u_2, \hspace{0.3cm}[u_2,u_3]=2u_2,$$
$$[e_1,u_2]=e_2,\hspace{0.3cm} [e_1,u_3]=-e_1,
\hspace{0.3cm}[e_2,u_1]=e_1,\hspace{0.3cm}  [e_2,u_3]=e_2,$$ and
where the omitted products are equal to zero. Then ${\frak
L}={\frak L}_{\bar 0} \oplus {\frak L}_{\bar 1}$ becomes a
(non-Lie) split Leibniz superalgebra
$${\frak L}=H \oplus {\frak
L}_{\alpha} \oplus {\frak L}_{-\alpha}\oplus {\frak
L}_{\beta}\oplus {\frak L}_{-\beta} $$ where $H=\langle u_3
\rangle, $  ${\frak L}_{\alpha}=\langle u_2
 \rangle,$  ${\frak L}_{-\alpha}=\langle u_1
 \rangle,$  ${\frak L}_{\beta}=\langle e_1
 \rangle$ and ${\frak L}_{-\beta}=\langle e_2
 \rangle,$ being $\alpha, \beta: H \to {\mathbb K}$ defined by  $\alpha(\lambda u_3)= 2\lambda$
 and $\beta(\lambda u_3)= -\lambda$, $\lambda \in {\mathbb K}.$
\end{example}

\begin{example}\label{example2}\rm
Consider the $(n+4)$-dimensional complex ${\mathbb Z}_2$-graded
vector space ${\frak L}={\frak L}_{\bar 0} \oplus {\frak L}_{\bar
1}$  with basis $\{h,u,v\}$ of ${\frak L}_{\bar 0}$ and
$\{e_0,e_1,..., e_n\}$ of ${\frak L}_{\bar 1};$   with the
following table of multiplication:
$$[u,h]=2u, \hspace{0.3cm} [h,u]=-2u, \hspace{0.3cm}
[v,h]=-2v,$$ $$[h,v]=2v,\hspace{0.3cm} [u,v]=h,
\hspace{0.3cm}[v,u]=-h,$$
$$\hbox{$[e_k,h]=(n-2k)e_{k},$ for $0\leq k \leq n;$}$$
$$\hbox{$[e_k,v]=e_{k+1},$ for $0\leq k \leq n-1;$}$$
$$\hbox{$[e_k,u]=k(k-n-1)e_{k-1},$ for $1\leq k \leq n;$}$$
and  where the omitted products are equal to zero. Then ${\frak
L}={\frak L}_{\bar 0} \oplus {\frak L}_{\bar 1}$ is a (non-Lie)
split Leibniz superalgebra
$${\frak L}=H \oplus {\frak L}_{\alpha}
\oplus {\frak L}_{-\alpha}\oplus (\bigoplus\limits_{k=0}^{n}{\frak
L}_{\beta_k})$$
 where $H=\langle h \rangle, $  ${\frak L}_{\alpha}=\langle u
 \rangle,$  ${\frak L}_{-\alpha}=\langle v
 \rangle$   and ${\frak L}_{\beta_k}=\langle e_k
 \rangle,$  $k=0,...,n$; being
 $\alpha, \beta_k: H \to {\mathbb K}$ defined by  $\alpha(\lambda h)= 2\lambda$
 and $\beta_k(\lambda h)= (n-2k)\lambda$ for $k=0,...,n$ and $\lambda \in {\mathbb C}.$
\end{example}

\medskip

By returning to a general  split Leibniz superalgebra ${\frak L}$,
(Definition \ref{def2}), it is clear that the root space
associated to the zero root satisfies $H \subset {\frak L}_{0}.$
Conversely, given any $v_0 \in {\frak L}_0$ we can write $v_0 = h
+ \sum\limits_{i=1}^n v_{\alpha_i}$ with $h \in H$ and
$v_{\alpha_i} \in {\frak L}_{\alpha_i}$ for $i \in \{1,...,n\}$,
being $\alpha_i \in \Lambda$ with $\alpha_i \neq \alpha_j$ if $i
\neq j$. Hence $0 = [h + \sum\limits_{i=1}^n v_{\alpha_i},h_{\bar
0}] = \sum\limits_{i=1}^n \alpha_i(h_{\bar 0})v_{\alpha_i}$ for
any $h_{\bar 0} \in H_{\bar 0}$. So, taking into account the
direct character of the sum and that $\alpha_i \neq 0$, we have
that any $v_{\alpha_i} = 0$ and then $v_0 \in H$. Consequently $$H
= {\frak L}_0.$$

By the grading of ${\frak L}$ we have that given any $v_{\alpha}
\in {\frak L}_{\alpha}, \alpha \in \Lambda \cup \{0\}$, expressed
in the form $v_{\alpha} = v_{\alpha,{\bar 0}} + v_{{\alpha},{\bar
1}}$ with $v_{\alpha, {\bar i}} \in {\frak L}_{\bar i}, {\bar i}
\in {\hu Z}_2$, then
$$[v_{\alpha, {\bar i}},h_{\bar 0}] = \alpha(h_{\bar 0})v_{\alpha, {\bar i}}$$ for any $h_{\bar 0} \in
H_{\bar 0}$. From here, ${\frak L}_{\alpha} = ({\frak L}_{\alpha}
\cap {\frak L}_{\bar 0}) \oplus ({\frak L}_{\alpha} \cap {\frak
L}_{\bar 1})$. Hence, by denoting $${\frak L}_{\alpha, {\bar i}}
:= {\frak L}_{\alpha} \cap {\frak L}_{\bar i}$$ we can write
\begin{equation}\label{separa}
{\frak L}_{\alpha} = {\frak L}_{\alpha, {\bar 0}} \oplus {\frak
L}_{\alpha, {\bar 1}}
\end{equation}
for any $\alpha \in \Lambda \cup \{0\}$.

From the above, $$\hbox{$H_{\bar 0} = {\frak L}_{0,{\bar 0}}$ and
$H_{\bar 1} = {\frak L}_{0,{\bar 1}}$,}$$ and also
$$\hbox{${\frak L}_{\bar 0} = H_{\bar 0} \oplus (\bigoplus\limits_{\alpha \in \Lambda_{\bar 0} }{\frak L}_{\alpha,{\bar 0}}),$
\hspace{0.1cm} ${\frak L}_{\bar 1} = H_{\bar 1} \oplus
(\bigoplus\limits_{\alpha \in \Lambda_{\bar 1}} {\frak
L}_{\alpha,{\bar 1}})$,}$$

where $$\Lambda_{\bar i}=\{\alpha \in \Lambda:  {\frak L}_{\alpha,
{\bar i}} \neq 0\}$$ for any ${\bar i} \in {\mathbb Z}_2$.

Taking into account the above  expression of ${\frak L}_{\bar 0}$,
the direct character of the sum and the fact that $\alpha \neq 0$
for any $\alpha \in \Lambda$, we have that $H_{\bar 0}$ is a
maximal abelian subalgebra  of the Leibniz algebra ${\frak
L}_{\bar 0}$. Hence ${\frak L}_{\bar 0}$ is a split Leibniz
algebra respect to $H_{\bar 0}$ (see \cite{Yo3}).

\medskip

The below lemma in an immediate consequence of Super Leibniz
identity.

\begin{lemma}\label{lema1}
If $[{\frak L}_{\alpha, \bar i}, {\frak L}_{\beta, \bar j}] \neq
0$ with $\alpha, \beta \in \Lambda \cup \{0\}$ and $\bar i, \bar j
\in {\mathbb Z}_2$, then $\alpha + \beta \in \Lambda_{\bar i +
\bar j} \cup \{0\}$ and $[{\frak L}_{\alpha, \bar i}, {\frak
L}_{\beta, \bar j}] \subset {\frak L}_{\alpha+\beta, \bar i + \bar
j}$.
\end{lemma}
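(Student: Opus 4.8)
The plan is to prove the lemma by a direct computation with the Super Leibniz identity, applied to the multiplication operator on the right by an element of $H_{\bar 0}$. Let $v_{\alpha} \in {\frak L}_{\alpha, \bar i}$ and $v_{\beta} \in {\frak L}_{\beta, \bar j}$ with $\alpha, \beta \in \Lambda \cup \{0\}$. First I would observe that $v_{\alpha} \in {\frak L}_{\bar i}$ and $v_{\beta} \in {\frak L}_{\bar j}$, so by the grading axiom $[v_{\alpha}, v_{\beta}] \in {\frak L}_{\bar i + \bar j}$; this already gives the ${\mathbb Z}_2$-grading part of the conclusion, so the remaining work is to locate the root.

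The key step is to apply the Super Leibniz identity $[x,[y,z]] = [[x,y],z] - (-1)^{\bar j \bar k}[[x,z],y]$ with the substitution $x = v_{\alpha}$, $y = v_{\beta}$, $z = h_{\bar 0}$ for an arbitrary $h_{\bar 0} \in H_{\bar 0}$. Since $h_{\bar 0}$ has degree $\bar 0$, the sign term $(-1)^{\bar j \bar k}$ with $\bar k = \bar 0$ is $+1$, so the identity reads
\begin{equation*}
[v_{\alpha},[v_{\beta},h_{\bar 0}]] = [[v_{\alpha},v_{\beta}],h_{\bar 0}] - [[v_{\alpha},h_{\bar 0}],v_{\beta}].
\end{equation*}
Now I would use $[v_{\beta},h_{\bar 0}] = \beta(h_{\bar 0})v_{\beta}$ and $[v_{\alpha},h_{\bar 0}] = \alpha(h_{\bar 0})v_{\alpha}$ to rewrite the left side as $\beta(h_{\bar 0})[v_{\alpha},v_{\beta}]$ and the last term on the right as $\alpha(h_{\bar 0})[v_{\alpha},v_{\beta}]$. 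Solving for $[[v_{\alpha},v_{\beta}],h_{\bar 0}]$ yields
\begin{equation*}
[[v_{\alpha},v_{\beta}],h_{\bar 0}] = \big(\alpha(h_{\bar 0}) + \beta(h_{\bar 0})\big)[v_{\alpha},v_{\beta}] = (\alpha+\beta)(h_{\bar 0})\,[v_{\alpha},v_{\beta}]
\end{equation*}
for every $h_{\bar 0} \in H_{\bar 0}$. By the definition of root space this says precisely that $[v_{\alpha},v_{\beta}] \in {\frak L}_{\alpha+\beta}$, and combining with the grading observation above, $[v_{\alpha},v_{\beta}] \in {\frak L}_{\alpha+\beta,\bar i+\bar j}$. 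Since this holds for arbitrary generators of $[{\frak L}_{\alpha,\bar i},{\frak L}_{\beta,\bar j}]$, we get the inclusion $[{\frak L}_{\alpha,\bar i},{\frak L}_{\beta,\bar j}] \subset {\frak L}_{\alpha+\beta,\bar i+\bar j}$. Finally, under the hypothesis $[{\frak L}_{\alpha,\bar i},{\frak L}_{\beta,\bar j}] \neq 0$, the space ${\frak L}_{\alpha+\beta,\bar i+\bar j}$ is nonzero, which by the definitions of $\Lambda$ and $\Lambda_{\bar i+\bar j}$ (together with the convention $H = {\frak L}_0$ established earlier) means $\alpha+\beta \in \Lambda_{\bar i+\bar j} \cup \{0\}$.

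I do not expect a genuine obstacle here; the statement is flagged as ``an immediate consequence of Super Leibniz identity'' and the only mild point to be careful about is the vanishing of the sign $(-1)^{\bar j\bar k}$, which is automatic because the third slot is filled by an element of $H_{\bar 0} \subset {\frak L}_{\bar 0}$. One should also note that the argument uses the Super Leibniz identity in its right-handed form exactly as stated in the Definition, so no separate treatment of the alternative (left-handed) convention is needed.
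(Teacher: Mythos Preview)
Your proof is correct and follows exactly the approach the paper indicates: the paper gives no detailed argument and simply remarks that the lemma is ``an immediate consequence of Super Leibniz identity,'' which is precisely the computation you carry out. Your careful handling of the sign and the ${\mathbb Z}_2$-grading makes explicit what the paper leaves implicit.
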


\smallskip

From Lemma \ref{lema1} and Equation (\ref{separa})  we also can
assert that
\begin{equation}\label{contencion}
[{\frak L}_{\alpha}, {\frak L}_{\beta}] \subset {\frak
L}_{\alpha+\beta}
\end{equation}
for any $\alpha, \beta \in \Lambda \cup \{0\}$.


\section{Connections of Roots. Decompositions}

In the following, ${\frak L}$ denotes a split Leibniz superalgebra
and ${\frak L} = H \oplus (\bigoplus\limits_{\alpha \in
\Lambda}{\frak L}_{\alpha})$ the corresponding root spaces
decomposition. We begin by developing connections of roots
techniques in this framework.

Given a linear functional $\alpha: H_{\bar 0} \to {\hu K}$, we
denote by $-\alpha: H_{\bar 0} \to {\hu K}$ the element in
$H_{\bar 0}^*$ defined by $(-\alpha)(h):=-\alpha(h)$ for all $h
\in H_{\bar 0}$. We will also denote by $-\Lambda=\{-\alpha:
\alpha \in \Lambda\}$ and by $\pm \Lambda= \Lambda \cup -
\Lambda$.

\begin{definition}\label{con}\rm
Let $\alpha \in \Lambda$ and $\beta \in \Lambda$ be two nonzero
roots. We say that $\alpha$ is {\it connected} to $\beta$ if there
exists a family $\alpha_1,\alpha_2,...,\alpha_n \in \pm \Lambda$
satisfying the following conditions:
\begin{enumerate}
\item[{\rm 1.}]$\alpha_1=\alpha.$ \item[{\rm 2.}]
$\{\alpha_1+\alpha_2,\alpha_1+\alpha_2+\alpha_3,...,\alpha_1+
\cdots + \alpha_{n-1}\} \subset \pm \Lambda.$ \item[{\rm 3.}]
$\alpha_1 +\alpha_2+ \cdots + \alpha_n \in \{\beta, -\beta\}$.
\end{enumerate}
We also say that $\{\alpha_1,...,\alpha_n\}$ is a {\it connection}
from $\alpha$ to $\beta$.
\end{definition}

Trivially, for any $\alpha \in \Lambda$ we have that the set
$\{\alpha\}$ is a connection from $\alpha$ to itself, and that in
case $-\alpha$ also belongs to $\Lambda$  the same set is a
connection from $\alpha$  to $-\alpha$.

Next, given $\alpha$ and $\beta$ two elements of $\Lambda$, we
write $\alpha \sim \beta$ when $\alpha$ is connected to $\beta$.
Note  that the observation in the above paragraph gives us $\alpha
\sim \alpha$ for any $\alpha \in \Lambda$. That is, the relation
$\sim$ is reflexive. If $\alpha \sim \beta$ there exists a
connection $\{\alpha _{1},\alpha _{2},\alpha
_{3},...,\alpha_{n-1},\alpha_{n}\}\subset \pm \Lambda$ from
$\alpha$ to $\beta$ satisfying, in particular, $\alpha_1
+\alpha_2+ \cdots + \alpha_n  = \epsilon \beta$ for some $\epsilon
\in \{1,-1\}$.
If $n=1$ then $\alpha_1= \alpha$ and $\beta \in
\{\alpha, -\alpha\}$. So $\{\beta\}$ is a connection from $\beta$
to $\alpha$. If $n > 1$, then we can  verify in a straightforward
way that
$$\{\beta ,-\epsilon\alpha_{n} ,-\epsilon\alpha_{n-1},...,
 -\epsilon\alpha_3,-\epsilon\alpha_2 \}$$ is a connection from $\beta$ to $\alpha$. That is, $\sim$ is symmetric.
 Finally, suppose $\alpha \sim \beta$ and $\beta \sim \gamma$, and
write $\{\alpha _{1},\alpha _{2},..., \alpha_{n}\}$ for a
connection from $\alpha$ to $\beta$, which satisfies $\alpha_1
+\alpha_2 \cdots + \alpha_n  = \epsilon \beta$ for some $\epsilon
\in \{1,-1\}$; and $\{\beta_1,\beta_2,..., \beta_{m}\}$ for a
connection from $\beta$ to $\gamma$. If $m=1$, then $\gamma \in
\pm \{\beta,-\beta\}$ and so $\{\alpha _{1},\alpha _{2},...,
\alpha_{n}\}$ is a connection from $\alpha$ to $\gamma$. If $m>
1$, then it is easy to show that $\{\alpha _{1},\alpha _{2},...,
\alpha_{n},\epsilon \beta _{2},..., \epsilon \beta_{m}\}$ is a
connection from $\alpha$ to $\gamma$. That is, $\sim$ is
transitive. Summarizing, (see also \cite[Proposition 2.1]{Yo3}),
we can assert:

\begin{proposition}\label{pro1}
The relation $\sim$ in $\Lambda$, defined by $\alpha \sim \beta$
if and only if $\alpha$ is connected to $\beta$ is an equivalence
relation.
\end{proposition}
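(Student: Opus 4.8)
The plan is to verify the three defining properties of an equivalence relation — reflexivity, symmetry, and transitivity — for the relation $\sim$ on $\Lambda$. In fact, the excerpt has already laid out the core of this argument in the paragraph preceding the statement, so the proof will essentially consist of organizing and slightly expanding those observations into a clean argument. The main work is bookkeeping with the index shifts and signs $\epsilon \in \{1,-1\}$ that arise from the non-symmetric root system; there is no deep obstacle, only the need to be careful that each purported connection genuinely satisfies conditions 1–3 of Definition \ref{con}.

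First I would treat reflexivity: for any $\alpha \in \Lambda$, the singleton $\{\alpha\}$ trivially satisfies the three conditions of Definition \ref{con} with $n = 1$ (condition 2 is vacuous and condition 3 reads $\alpha \in \{\alpha, -\alpha\}$), so $\alpha \sim \alpha$. For symmetry, suppose $\{\alpha_1,\dots,\alpha_n\}$ is a connection from $\alpha$ to $\beta$, so $\alpha_1 = \alpha$ and $\alpha_1 + \cdots + \alpha_n = \epsilon\beta$ for some $\epsilon \in \{1,-1\}$, with all partial sums $\alpha_1+\cdots+\alpha_k$ ($1 \le k \le n-1$) lying in $\pm\Lambda$. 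If $n = 1$ then $\beta \in \{\alpha,-\alpha\}$ and $\{\beta\}$ works. If $n > 1$, I claim $\{\beta, -\epsilon\alpha_n, -\epsilon\alpha_{n-1}, \dots, -\epsilon\alpha_2\}$ is a connection from $\beta$ to $\alpha$: its first entry is $\beta$; its partial sums are $\beta - \epsilon(\alpha_n + \cdots + \alpha_{k+1}) = \epsilon^{-1}(\epsilon\beta - (\alpha_n + \cdots + \alpha_{k+1})) = \epsilon(\alpha_1 + \cdots + \alpha_k)$ for $1 \le k \le n-1$ (using $\epsilon^{-1} = \epsilon$), which lies in $\pm\Lambda$ precisely because $\alpha_1 + \cdots + \alpha_k$ does; and the full sum is $\epsilon(\alpha_1) = \epsilon\alpha \in \{\alpha,-\alpha\}$, giving condition 3.

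Finally, for transitivity, suppose $\alpha \sim \beta$ via a connection $\{\alpha_1,\dots,\alpha_n\}$ with $\alpha_1 + \cdots + \alpha_n = \epsilon\beta$, and $\beta \sim \gamma$ via $\{\beta_1,\dots,\beta_m\}$. If $m = 1$ then $\gamma \in \{\beta,-\beta\}$, so the full sum of the first connection already lands in $\{\gamma,-\gamma\}$ and $\{\alpha_1,\dots,\alpha_n\}$ itself witnesses $\alpha \sim \gamma$. If $m > 1$, I would check that $\{\alpha_1,\dots,\alpha_n,\epsilon\beta_2,\dots,\epsilon\beta_m\}$ is a connection from $\alpha$ to $\gamma$: the partial sums through index $n-1$ are unchanged and lie in $\pm\Lambda$; the partial sum through index $n$ is $\epsilon\beta \in \pm\Lambda$; the partial sum through index $n + j$ (for $1 \le j \le m-1$) equals $\epsilon\beta + \epsilon(\beta_2 + \cdots + \beta_{j+1}) = \epsilon(\beta_1 + \cdots + \beta_{j+1})$, which is in $\pm\Lambda$ since $\beta_1 + \cdots + \beta_{j+1}$ is; and the full sum is $\epsilon(\beta_1 + \cdots + \beta_m) = \epsilon(\pm\gamma) \in \{\gamma,-\gamma\}$. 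Hence $\alpha \sim \gamma$. Combining the three parts, $\sim$ is an equivalence relation. The only subtlety to emphasize throughout is that an index-$k$ partial sum being in $\pm\Lambda$ is equivalent to its negative being in $\pm\Lambda$ (since $\pm\Lambda = \Lambda \cup -\Lambda$ is symmetric by construction), which is what makes the sign twists harmless.
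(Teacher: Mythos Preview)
Your proof is correct and follows essentially the same approach as the paper: the paper's argument, given in the paragraph immediately preceding the proposition, uses exactly the same reversed connection $\{\beta,-\epsilon\alpha_n,\dots,-\epsilon\alpha_2\}$ for symmetry and the same concatenated connection $\{\alpha_1,\dots,\alpha_n,\epsilon\beta_2,\dots,\epsilon\beta_m\}$ for transitivity. Your version simply fills in the partial-sum bookkeeping that the paper leaves to the reader.
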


By Proposition \ref{pro1} the connection relation is an
equivalence relation in $\Lambda$ and so we can consider the
quotient set
$$\Lambda / \sim := \{[\alpha]: \alpha \in \Lambda\},$$
becoming  $[\alpha]$  the set of nonzero roots of ${\frak L}$
which are connected to $\alpha$. By the definition of $\sim$, it
is clear that if $\beta \in [\alpha]$ and $-\beta \in \Lambda$
then $-\beta \in [\alpha]$.

\medskip

Our next goal  is to associate an ideal $I_{[\alpha]}$ of  ${\frak
L}$ to any $[\alpha]$. Fix $\alpha \in \Lambda$, we start by
defining the set $H_{[\alpha]} \subset H$ as follows
$$H_{[\alpha]} := span_{\hu K}\{[{\frak L}_{\beta},{\frak L}_{-\beta}]: \beta \in [\alpha]\}=$$
$$= (\sum\limits_{\beta \in [\alpha]}([{\frak L}_{\beta,{\bar
0}},{\frak L}_{-\beta,{\bar 0}}]+[{\frak L}_{\beta,{\bar
1}},{\frak L}_{-\beta,{\bar 1}}])) \oplus (\sum\limits_{\beta \in
[\alpha]}([{\frak L}_{\beta,{\bar 0}},{\frak L}_{-\beta,{\bar
1}}]+[{\frak L}_{\beta,{\bar 1}},{\frak L}_{-\beta,{\bar 0}}]))
\subset$$
$$\subset H_{\bar 0} \oplus H_{\bar 1},$$ last equality being
consequence of Equation (\ref{contencion}). Next, we define
$$V_{[\alpha]} := \bigoplus\limits_{\beta \in
[\alpha]}{\frak L}_{\beta} = \bigoplus\limits_{\beta \in
[\alpha]}({\frak L}_{\beta,{\bar 0}} \oplus {\frak L}_{\beta,
{\bar 1}}).$$ Finally, we denote by ${\frak L}_{[\alpha]}$ the
(graded) subspace given by the direct sum of the two subspaces
above, that is,
 $${\frak L}_{[\alpha]} := H_{[\alpha]} \oplus V_{[\alpha]}.$$

\begin{proposition}\label{pro2}
For any $\alpha \in \Lambda$, the linear subspace ${\frak
L}_{[\alpha]}$ is a Leibniz supersubalgebra of ${\frak L}$.


\end{proposition}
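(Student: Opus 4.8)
The plan is to show that ${\frak L}_{[\alpha]} = H_{[\alpha]} \oplus V_{[\alpha]}$ is closed under the bracket by checking all the relevant products among the constituent pieces. Since ${\frak L}_{[\alpha]}$ is a graded subspace, it suffices to bracket homogeneous pieces: $[H_{[\alpha]}, H_{[\alpha]}]$, $[H_{[\alpha]}, V_{[\alpha]}]$, $[V_{[\alpha]}, H_{[\alpha]}]$, and $[V_{[\alpha]}, V_{[\alpha]}]$. The first bracket is the easiest: $H_{[\alpha]} \subset H$ and $H$ is abelian, so $[H_{[\alpha]}, H_{[\alpha]}] = 0 \subset {\frak L}_{[\alpha]}$.

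For the mixed brackets with $H_{[\alpha]}$, I would use that $H_{[\alpha]} \subset H = {\frak L}_0$, so by Equation (\ref{contencion}) we have $[H_{[\alpha]}, {\frak L}_{\beta}] \subset {\frak L}_{\beta}$ and $[{\frak L}_{\beta}, H_{[\alpha]}] \subset {\frak L}_{\beta}$ for any $\beta \in [\alpha]$; hence both $[H_{[\alpha]}, V_{[\alpha]}]$ and $[V_{[\alpha]}, V_{[\alpha]}]$'s ``$H$-part" contributions land back in $V_{[\alpha]}$. The genuinely substantive case is $[V_{[\alpha]}, V_{[\alpha]}]$, i.e. $[{\frak L}_{\beta}, {\frak L}_{\gamma}]$ for $\beta, \gamma \in [\alpha]$. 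By Lemma \ref{lema1} (and Equation (\ref{contencion})) this product is zero unless $\beta + \gamma \in \Lambda \cup \{0\}$. If $\beta + \gamma = 0$, i.e. $\gamma = -\beta$, then $[{\frak L}_{\beta}, {\frak L}_{-\beta}] \subset H_{[\alpha]}$ by the very definition of $H_{[\alpha]}$ (note $-\beta \in \Lambda$ here, and $-\beta \in [\alpha]$ by the remark that $[\alpha]$ is closed under negation of its roots lying in $\Lambda$, but we do not even need that). If $\beta + \gamma =: \delta \in \Lambda$, then I must show $\delta \in [\alpha]$, so that $[{\frak L}_{\beta}, {\frak L}_{\gamma}] \subset {\frak L}_{\delta} \subset V_{[\alpha]}$.

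This last point — showing $\beta + \gamma \in [\alpha]$ whenever $\beta, \gamma \in [\alpha]$ and $\beta + \gamma \in \Lambda$ — is the main obstacle, and it is a pure connection-of-roots argument. The idea: since $\beta \in [\alpha]$, pick a connection $\{\alpha_1, \dots, \alpha_n\}$ from $\alpha$ to $\beta$, so $\alpha_1 + \cdots + \alpha_n = \epsilon\beta$ for some $\epsilon \in \{\pm 1\}$ and all partial sums of length $\ge 2$ lie in $\pm\Lambda$. Now I want to extend this to a connection from $\alpha$ to $\delta = \beta + \gamma$. In the case $\epsilon = 1$, the candidate is $\{\alpha_1, \dots, \alpha_n, \gamma\}$: its total sum is $\beta + \gamma = \delta \in \{\delta, -\delta\}$, its length-$n$ partial sum is $\beta \in \Lambda \subset \pm\Lambda$, and the shorter partial sums are inherited from the original connection; hence $\{\alpha_1,\dots,\alpha_n,\gamma\}$ is a connection from $\alpha$ to $\delta$, giving $\delta \sim \alpha$, i.e. $\delta \in [\alpha]$. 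In the case $\epsilon = -1$ one argues symmetrically, using that $\sim$ is symmetric (Proposition \ref{pro1}) — or, noting $-\beta = \alpha_1 + \cdots + \alpha_n \in \pm\Lambda$, first pass to a connection realizing $+\beta$ via the symmetry construction and then append $\gamma$ as above; either way one concludes $\delta \in [\alpha]$. Once this is in hand, assembling the four bracket computations shows $[{\frak L}_{[\alpha]}, {\frak L}_{[\alpha]}] \subset {\frak L}_{[\alpha]}$, and since ${\frak L}_{[\alpha]}$ is graded by construction, it is a Leibniz supersubalgebra of ${\frak L}$.
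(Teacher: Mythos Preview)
Your proof is correct and follows essentially the same strategy as the paper: reduce to the pieces $[H_{[\alpha]},H_{[\alpha]}]$, $[H_{[\alpha]},V_{[\alpha]}]$, $[V_{[\alpha]},H_{[\alpha]}]$, $[V_{[\alpha]},V_{[\alpha]}]$, dispose of the first three via $H={\frak L}_0$ and Equation~(\ref{contencion}), and then show that $\beta+\gamma\in[\alpha]$ whenever $\beta,\gamma\in[\alpha]$ and $\beta+\gamma\in\Lambda$. The only difference is in that last step: you build a connection from $\alpha$ to $\beta+\gamma$ by concatenating a connection from $\alpha$ to $\beta$ with the extra term $\gamma$ (and then must handle the sign $\epsilon=-1$ separately), whereas the paper simply observes that $\{\beta,\gamma\}$ is itself a connection from $\beta$ to $\beta+\gamma$ and invokes transitivity of $\sim$ (Proposition~\ref{pro1}) together with $\beta\in[\alpha]$---this avoids the case split entirely and is a bit cleaner.
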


\begin{proof}
We have to check that ${\frak L}_{[\alpha]}$ satisfies
$$[{\frak L}_{[\alpha]},{\frak
L}_{[\alpha]}] \subset {\frak L}_{[\alpha]}.$$
 Taking into account $H = {\frak L}_0$ and Equation (\ref{contencion}), we
have
\begin{equation}\label{cero}
[{\frak L}_{[\alpha]}, {\frak L}_{[\alpha]}] = [H_{[\alpha]}
\oplus V_{[\alpha]}, H_{[\alpha]} \oplus V_{[\alpha]}] \subset
V_{[\alpha]} + \sum\limits_{\beta, \delta \in [\alpha]}[{\frak
L}_{\beta},{\frak L}_{\delta}].
\end{equation}
If $\delta = -\beta$ then
\begin{equation}\label{uno}
[{\frak L}_{\beta}, {\frak L}_{\delta}] \subset H_{[\alpha]}.
\end{equation}
If $\delta \neq -\beta$, by Equation (\ref{contencion}) we have
that in case $[{\frak L}_{\beta}, {\frak L}_{\delta}] \neq 0$ then
$\beta + \delta \in \Lambda$. From here, $\{\beta,\delta\}$ is a
connection from $\beta$ to $\beta + \delta$. Hence $\beta + \delta
\in [\alpha]$ and so
\begin{equation}\label{dos}
[{\frak L}_{\beta},{\frak L}_{\delta}] \subset V_{[\alpha]}.
\end{equation}
From Equations (\ref{cero}), (\ref{uno}) and (\ref{dos}) we
conclude $[{\frak L}_{[\alpha]}, {\frak L}_{[\alpha]}] \subset
{\frak L}_{[\alpha]}$.
\end{proof}

\begin{proposition}\label{nuevapro}
For any $[\alpha ]\neq [\beta ]$ we have $[{\frak
L}_{[\alpha]},{\frak L}_{[\beta]}] = 0.$
\end{proposition}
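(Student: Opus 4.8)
The plan is to expand the bracket according to the decomposition ${\frak L}_{[\alpha]} = H_{[\alpha]}\oplus V_{[\alpha]}$ (and likewise for $[\beta]$), so that
$$[{\frak L}_{[\alpha]},{\frak L}_{[\beta]}] = [H_{[\alpha]},H_{[\beta]}] + [H_{[\alpha]},V_{[\beta]}] + [V_{[\alpha]},H_{[\beta]}] + [V_{[\alpha]},V_{[\beta]}],$$
and to show that each of the four summands vanishes. The first one is immediate: since $H_{[\alpha]},H_{[\beta]}\subset H$ and $H$ is abelian, $[H_{[\alpha]},H_{[\beta]}]\subset[H,H]=0$.

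The key step, from which the remaining summands will also follow, is the claim that $[{\frak L}_{\gamma},{\frak L}_{\delta}] = 0$ whenever $\gamma\in[\alpha]$ and $\delta\in[\beta]$. I would prove it by contradiction: if $[{\frak L}_{\gamma},{\frak L}_{\delta}]\neq 0$ then Equation (\ref{contencion}) forces $\gamma+\delta\in\Lambda\cup\{0\}$. The case $\gamma+\delta=0$ cannot occur, since then $\delta=-\gamma$ with ${\frak L}_{-\gamma}\neq 0$, so $-\gamma\in\Lambda$; the observation after Proposition \ref{pro1} then gives $-\gamma\in[\gamma]=[\alpha]$, while $-\gamma=\delta\in[\beta]$, contradicting $[\alpha]\neq[\beta]$. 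If instead $\gamma+\delta\in\Lambda$, then $\{\gamma,\delta\}$ is a connection from $\gamma$ to $\gamma+\delta$ and $\{\delta,\gamma\}$ is a connection from $\delta$ to $\gamma+\delta$ (both of length two, so condition~2 of Definition \ref{con} is vacuous); hence $\gamma+\delta\in[\gamma]\cap[\delta]=[\alpha]\cap[\beta]$, again impossible. This settles $[V_{[\alpha]},V_{[\beta]}]=0$ at once.

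For the two mixed summands I would use the super Leibniz identity, recalling that $H_{[\alpha]}$ is spanned by the homogeneous brackets $[x_{\gamma,\bar i},x_{-\gamma,\bar j}]$ with $\gamma\in[\alpha]$, $\bar i,\bar j\in{\mathbb Z}_2$ (and similarly for $H_{[\beta]}$). For $[H_{[\alpha]},V_{[\beta]}]$, taking such a generator and $v_{\delta}\in{\frak L}_{\delta}$ with $\delta\in[\beta]$, the identity rewrites $[[x_{\gamma,\bar i},x_{-\gamma,\bar j}],v_{\delta}]$ as a linear combination of $[x_{\gamma,\bar i},[x_{-\gamma,\bar j},v_{\delta}]]$ and $[[x_{\gamma,\bar i},v_{\delta}],x_{-\gamma,\bar j}]$, hence of terms lying in $[{\frak L}_{-\gamma},{\frak L}_{\delta}]$ and $[{\frak L}_{\gamma},{\frak L}_{\delta}]$; since also $-\gamma\in[\alpha]$ (when the corresponding root space is nonzero), the key step makes both vanish. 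Symmetrically, for $[V_{[\alpha]},H_{[\beta]}]$ one expands $[v_{\gamma},[x_{\delta,\bar i},x_{-\delta,\bar j}]]$ into terms lying in $[{\frak L}_{\gamma},{\frak L}_{\delta}]$ and $[{\frak L}_{\gamma},{\frak L}_{-\delta}]$ with $\gamma\in[\alpha]$ and $\delta,-\delta\in[\beta]$, again zero by the key step. Adding up the four vanishings yields $[{\frak L}_{[\alpha]},{\frak L}_{[\beta]}]=0$.

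The main obstacle I anticipate is purely the bookkeeping in the last paragraph: one has to carry the ${\mathbb Z}_2$-degrees so that the super Leibniz identity is applied with the correct signs, and one must make sure that replacing $\gamma$ by $-\gamma$ (or $\delta$ by $-\delta$) really keeps us inside the same connection class — which is exactly the content of the remark following Proposition \ref{pro1}, valid precisely because $-\gamma\in\Lambda$ as soon as ${\frak L}_{-\gamma}\neq 0$. The same remark is also what makes the $\gamma+\delta=0$ case of the key step work, and it should not be skipped.
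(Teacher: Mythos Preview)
Your proposal is correct and follows essentially the same route as the paper: expand $[{\frak L}_{[\alpha]},{\frak L}_{[\beta]}]$ along $H_{[\cdot]}\oplus V_{[\cdot]}$, first kill $[V_{[\alpha]},V_{[\beta]}]$ via a connection argument, and then reduce the mixed terms to this case through the Super Leibniz identity. The only cosmetic difference is in the connection step: the paper uses the length-three connection $\{\eta,\gamma,-\eta\}$ from $\eta$ directly to $\gamma$, whereas you connect both $\gamma$ and $\delta$ to $\gamma+\delta$ by length-two connections; and you make the case $\delta=-\gamma$ explicit where the paper dismisses it with ``necessarily $\eta\neq -\gamma$''. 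Both variants are valid and neither adds or saves anything substantial.
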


\begin{proof}
 We have $$[{\frak L}_{[\alpha]}, {\frak
L}_{[\beta]}] = [H_{[\alpha]} \oplus V_{[\alpha]}, H_{[\beta]}
\oplus V_{[\beta]}] \subset $$
\begin{equation}\label{cuatro}
\subset [H_{[\alpha]}, V_{[\beta]}] + [V_{[\alpha]}, H_{[\beta]}]
+ [V_{[\alpha]}, V_{[\beta]}].
\end{equation}
Consider the above third summand $[V_{[\alpha]},V_{[\beta]}]$ and
suppose there exist $\eta \in [\alpha]$ and $\gamma \in [\beta]$
such that $[{\frak L}_{\eta}, {\frak L}_{\gamma}] \neq 0$. As
necessarily $\eta \neq -\gamma$, then $\eta + \gamma \in \Lambda$.
So $\{\eta,  \gamma, -\eta\}$ is a connection from $\eta$ to
$\gamma$. By the transitivity of the connection relation we have
$\alpha \sim \beta$, a contradiction. Hence $[{\frak L}_{\eta},
{\frak L}_{\gamma}] = 0$ and so
\begin{equation}\label{nueve}
[V_{[\alpha]}, V_{[\beta]}] = 0.
\end{equation}
Consider now the second summand $[V_{[\alpha]}, H_{[\beta]}]$ from
(\ref{cuatro}) and suppose exist $\eta \in [\alpha], \gamma \in
[\beta]$ such that $[{\frak L}_{\eta}, [{\frak L}_{\gamma}, {\frak
L}_{-\gamma}]] \neq 0$. Hence, there exist $\bar{i}, \bar{j},
\bar{k} \in {\mathbb Z}_2$ such that
$$[{\frak L}_{\eta, \bar{i}},[{\frak L}_{\gamma,\bar{j}}, {\frak
L}_{-\gamma, \bar{k}}]]  \neq 0.$$ By Super Leibniz identity we
get either $[{\frak L}_{\eta, \bar{i}},{\frak L}_{\gamma,\bar{j}}]
\neq 0$ or $[{\frak L}_{\eta, \bar{i}},{\frak L}_{-\gamma,
\bar{k}}] \neq 0$. From here $[V_{[\alpha]},V_{[\beta]}] \neq 0$
in any case, what contradicts Equation (\ref{nueve}).

Finally, consider the first summand $[H_{[\alpha]}, V_{[\beta]}]$
from (\ref{cuatro}) and suppose there exist $\eta \in [\alpha],
\gamma \in [\beta]$ such that $[[{\frak L}_{\eta}, {\frak
L}_{-\eta}], {\frak L}_{\gamma}] \neq 0$. We have as above that we
can take $\bar{i}, \bar{j}, \bar{k} \in {\mathbb Z}_2$ satisfying
$$[[{\frak L}_{\eta, {\bar i}},{\frak L}_{-\eta,{\bar j}}], {\frak
L}_{\gamma,{\bar k}}] \neq 0,$$ and that by Super Leibniz identity
 either $[{\frak L}_{\eta,\bar{i}},{\frak L}_{\gamma,\bar{k}
}] \neq 0$ or $[{\frak L}_{-\eta,\bar{j}},{\frak
L}_{\gamma,\bar{k}}] \neq 0$. From here
$[V_{[\alpha]},V_{[\beta]}] \neq 0$ in any case, what contradicts
Equation (\ref{nueve}). Hence
$$[V_{[\alpha]}, V_{[\beta]}] = 0.$$ By Equation
(\ref{cuatro}) we conclude $[{\frak L}_{[\alpha]},{\frak
L}_{[\beta]}] = 0$.
\end{proof}

Proposition \ref{pro2} allows us to assert that for any $\alpha
\in \Lambda, {\frak L}_{[\alpha]}$ is a  Leibniz supersubalgebra
of ${\frak L}$ that we call the Leibniz supersubalgebra of ${\frak
L}$ {\it associated} to $[\alpha]$.

\begin{theorem}\label{teo1}
The following assertions hold.
\begin{enumerate}
\item[{\rm 1.}]  For any $\alpha \in \Lambda$, the Leibniz
supersubalgebra
\[
{\frak L}_{[\alpha]} = H_{[\alpha]} \oplus V_{[\alpha]}
\]
of ${\frak L}$ associated to $[\alpha]$ is an ideal of ${\frak
L}$. \item[{\rm 2.}] If ${\frak L}$ is simple, then there exists a
connection between any two nonzero roots of ${\frak L}$ and $H =
\sum\limits_{\beta \in \Lambda} [{\frak L}_{\beta}, {\frak
L}_{-\beta}]$.
\end{enumerate}
\end{theorem}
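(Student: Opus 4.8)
The plan is to prove the two assertions in turn, relying on the decomposition structure already established.

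\medskip

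\textbf{Proof of assertion 1.} To show that ${\frak L}_{[\alpha]} = H_{[\alpha]} \oplus V_{[\alpha]}$ is an ideal, I must verify $[{\frak L}_{[\alpha]}, {\frak L}] + [{\frak L}, {\frak L}_{[\alpha]}] \subset {\frak L}_{[\alpha]}$. Using $H = {\frak L}_0$ and the root-space decomposition ${\frak L} = H \oplus (\bigoplus_{\gamma \in \Lambda} {\frak L}_\gamma)$, together with Proposition \ref{nuevapro}, it suffices to analyze $[{\frak L}_{[\alpha]}, {\frak L}_\gamma]$ and $[{\frak L}_\gamma, {\frak L}_{[\alpha]}]$ for $\gamma \in \Lambda$ with $\gamma \notin [\alpha]$ (the case $\gamma \in [\alpha]$ being absorbed by Proposition \ref{pro2}), and also $[{\frak L}_{[\alpha]}, H]$, $[H, {\frak L}_{[\alpha]}]$. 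First, $[V_{[\alpha]}, H] \subset V_{[\alpha]}$ and $[H, V_{[\alpha]}] \subset V_{[\alpha]}$ by Equation (\ref{contencion}) since $H = {\frak L}_0$. For $[H_{[\alpha]}, H]$ and $[H, H_{[\alpha]}]$: elements of $H_{[\alpha]}$ lie in $H$, and $[H,H]=0$, so these vanish. It remains to treat products with ${\frak L}_\gamma$, $\gamma \notin [\alpha]$. The key point: if $\beta \in [\alpha]$ and $[{\frak L}_\beta, {\frak L}_\gamma] \neq 0$ with $\gamma \notin [\alpha]$, then $\gamma \neq -\beta$ (else $\gamma = -\beta \in [\alpha]$), so by Equation (\ref{contencion}), $\beta + \gamma \in \Lambda$, and $\{\beta, \gamma\}$ is a connection from $\beta$ to $\beta+\gamma$; by transitivity $\gamma \sim \beta+\gamma$ — but also I can build a connection from $\gamma$ to $\beta$, namely $\{\gamma, \beta, -\gamma\}$ (noting $\gamma + \beta \in \Lambda$), forcing $\gamma \sim \beta \sim \alpha$, a contradiction. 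Hence $[V_{[\alpha]}, {\frak L}_\gamma] = 0 = [{\frak L}_\gamma, V_{[\alpha]}]$. For $[H_{[\alpha]}, {\frak L}_\gamma]$: if this is nonzero then, splitting into homogeneous components and applying the Super Leibniz identity exactly as in the proof of Proposition \ref{nuevapro}, one of $[{\frak L}_{\beta,\bar i}, {\frak L}_{\gamma, \bar k}]$ or $[{\frak L}_{-\beta, \bar j}, {\frak L}_{\gamma, \bar k}]$ is nonzero, and since both $\beta, -\beta \in [\alpha]$ this contradicts $[V_{[\alpha]}, {\frak L}_\gamma] = 0$. Symmetrically $[{\frak L}_\gamma, H_{[\alpha]}] = 0$. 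Collecting everything, $[{\frak L}_{[\alpha]}, {\frak L}] + [{\frak L}, {\frak L}_{[\alpha]}] \subset {\frak L}_{[\alpha]}$, so ${\frak L}_{[\alpha]}$ is a (graded) ideal.

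\medskip

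\textbf{Proof of assertion 2.} Suppose ${\frak L}$ is simple. By assertion 1, for each $\alpha \in \Lambda$ the ideal ${\frak L}_{[\alpha]}$ is one of $\{0\}$, ${\frak I}$, or ${\frak L}$. Since $0 \neq {\frak L}_\alpha \subset {\frak L}_{[\alpha]}$, the ideal is not $\{0\}$. If ${\frak L}_{[\alpha]} = {\frak I}$, then since $[{\frak L}, {\frak I}] = 0$ by Equation (\ref{equi}), we would get $[{\frak L}_\alpha, H_{\bar 0}] = 0$, i.e. $\alpha(H_{\bar 0}) = 0$, contradicting $\alpha \neq 0$. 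Hence ${\frak L}_{[\alpha]} = {\frak L}$ for every $\alpha \in \Lambda$. But then $V_{[\alpha]} = \bigoplus_{\beta \in [\alpha]} {\frak L}_\beta$ must equal $\bigoplus_{\beta \in \Lambda} {\frak L}_\beta$, forcing $[\alpha] = \Lambda$; that is, any two nonzero roots are connected. Moreover ${\frak L}_{[\alpha]} = {\frak L}$ gives $H_{[\alpha]} = H$, and since $[\alpha] = \Lambda$ this reads $H = \sum_{\beta \in \Lambda} [{\frak L}_\beta, {\frak L}_{-\beta}]$.

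\medskip

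The main obstacle is assertion 1: one must carefully handle the mixed products $[H_{[\alpha]}, {\frak L}_\gamma]$ and $[{\frak L}_\gamma, H_{[\alpha]}]$, where a direct root-additivity argument does not immediately apply because $H_{[\alpha]}$ sits inside $H = {\frak L}_0$; the resolution is to push everything back through the Super Leibniz identity (as in Proposition \ref{nuevapro}) to products of two root spaces, where the connection machinery bites. Once that reduction is in place, simplicity in assertion 2 is a short deduction using Equation (\ref{equi}) and the fact that $\alpha \neq 0$ rules out the ${\frak I}$ alternative.
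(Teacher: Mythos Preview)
Your argument for assertion~1 is correct and follows the same route as the paper, though you redo the content of Proposition~\ref{nuevapro} rather than simply invoking it. The paper's proof is shorter: since for any $\beta\in\Lambda$ one has ${\frak L}_\beta\subset{\frak L}_{[\beta]}$, Proposition~\ref{pro2} (when $[\beta]=[\alpha]$) together with Proposition~\ref{nuevapro} (when $[\beta]\ne[\alpha]$) immediately give $[{\frak L}_{[\alpha]},{\frak L}_\beta]+[{\frak L}_\beta,{\frak L}_{[\alpha]}]\subset{\frak L}_{[\alpha]}$, and $[H_{[\alpha]},H]+[H,H_{[\alpha]}]\subset[H,H]=0$ finishes the job.

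Your argument for assertion~2, however, contains a genuine gap. From ${\frak L}_{[\alpha]}={\frak I}$ you conclude $[{\frak L}_\alpha,H_{\bar 0}]=0$ via Equation~(\ref{equi}); but Equation~(\ref{equi}) reads $[{\frak L},{\frak I}]=0$, i.e.\ elements of ${\frak I}$ annihilate on the \emph{right}. What the inclusion ${\frak L}_\alpha\subset{\frak I}$ yields is $[H_{\bar 0},{\frak L}_\alpha]=0$, the vanishing of the \emph{left} action of $H_{\bar 0}$ on ${\frak L}_\alpha$. The root-space condition $[v_\alpha,h_{\bar 0}]=\alpha(h_{\bar 0})v_\alpha$ places $h_{\bar 0}$ on the right, so no contradiction follows. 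Indeed, root spaces may well sit inside ${\frak I}$: in Examples~\ref{example1} and~\ref{example3} one has ${\frak L}_\beta=\langle e_1\rangle\subset{\frak I}$ while $[e_1,u_3]=-e_1\neq 0$. The paper does not attempt to exclude the alternative ${\frak L}_{[\alpha]}={\frak I}$; instead it argues that if ${\frak L}_{[\alpha]}={\frak I}$ for \emph{every} $\alpha\in\Lambda$, then ${\frak L}_{[\alpha]}={\frak L}_{[\beta]}$ forces $V_{[\alpha]}=V_{[\beta]}$ and hence $[\alpha]=[\beta]$, so there is only one equivalence class and all roots are connected in this case as well.
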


\begin{proof}
1. We have to check that
\[
[{\frak L}_{[\alpha]}, {\frak L}] + [{\frak L}, {\frak
L}_{[\alpha]}] \subset {\frak L}_{[\alpha]}.
\]
Since ${\frak L}_{[\alpha]} = H_{[\alpha]} \oplus V_{[\alpha]}$
and ${\frak L} = H\oplus \big(\bigoplus\limits_{\beta \in
\Lambda}{\frak L}_{\beta }\big)$, we are going to consider several
steps. First note that
\[
[H_{[{\alpha}]}, H] + [H, {H}_{[{\alpha}]}]  \subset [H,H] = 0.
\]
Next, given $\beta \in \Lambda $ an application of Propositions
\ref{pro2} and \ref{nuevapro} gives
\[
[{\frak L}_{[{\alpha}]}, {\frak L}_\beta] + [{\frak L}_\beta,
{\frak L}_{[{\alpha}]}] \subset {\frak L}_{[{\alpha}]}.
\]
Therefore,
$$[{\frak L}_{[{\alpha}]}, {\frak L}] + [{\frak L}, {\frak
L}_{[{\alpha}]}]  \subset {\frak L}_{[{\alpha}]}.$$

\smallskip

2. The simplicity of ${\frak L}$ applies to get that ${\frak
L}_{[\alpha]} \in \{ {\frak I}, {\frak L} \}$ for any $\alpha \in
\Lambda$. If ${\frak L}_{[{\alpha_0}]}={\frak L}$ for some
$\alpha_0 \in \Lambda$ then ${[{\alpha_0}]} = \Lambda$; which
implies that any nonzero root of ${\frak L}$ is connected to
$\alpha_0$, and therefore, any nonzero roots of ${\frak L}$ are
connected. Assume now that ${\frak L}_{[\alpha]} = {\frak I}$ for
every $\alpha \in \Lambda$. Then $[\alpha] =[\beta]$ for any
$\alpha, \beta \in {\frak L}$. Since
$\Lambda=\bigcup\limits_{[\gamma]\in \Lambda \diagup \sim}
[\gamma]$ we get $[\alpha]=\Lambda$. That is,  all nonzero roots
of ${\frak L}$ are connected. To finish, observe that the fact $H
= \sum\limits_{\beta \in \Lambda} [{\frak L}_{\beta}, {\frak
L}_{-\beta}]$  follows in any case.
\end{proof}

\smallskip

\begin{notation}\rm
Let us denote $$H_{\Lambda}:=\sum\limits_{\beta \in \Lambda}
[{\frak L}_{\beta}, {\frak L}_{-\beta}]
$$
In what follows, we will use the  terminology $I_{[\alpha]}:=
{\frak L}_{[\alpha]}$ where ${\frak L}_{[\alpha]}$ is one of the
ideals of ${\frak L}$ described in Theorem \ref{teo1}-1.
\end{notation}

\begin{theorem} \label{teo2}
For a vector space complement $\mathcal{U}$ of $H_{\Lambda}$ in
$H$, it follows
\[
{\frak L} = \mathcal{U} + \sum_{[\alpha] \in \Lambda/\sim}
I_{[\alpha]}.
\]
Moreover,
\[
[I_{[\alpha]},I_{[\beta]}] = 0,
\]
whenever $[\alpha] \neq [\beta]$.
\end{theorem}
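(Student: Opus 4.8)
The plan is to prove the decomposition by splitting $H$ and each root space into the appropriate pieces and then packaging them using the ideals $I_{[\alpha]}$. First I would recall that, by the discussion preceding the section, $H = {\frak L}_0$ and ${\frak L} = H \oplus (\bigoplus_{\alpha \in \Lambda} {\frak L}_\alpha)$. Since $\sim$ is an equivalence relation on $\Lambda$ (Proposition \ref{pro1}), the set $\Lambda$ is the disjoint union $\Lambda = \bigcup_{[\alpha] \in \Lambda/\sim} [\alpha]$, and therefore
\[
\bigoplus_{\alpha \in \Lambda} {\frak L}_\alpha = \bigoplus_{[\alpha] \in \Lambda/\sim} \Big(\bigoplus_{\beta \in [\alpha]} {\frak L}_\beta\Big) = \bigoplus_{[\alpha] \in \Lambda/\sim} V_{[\alpha]}.
\]
On the other hand, $H_{\Lambda} = \sum_{\beta \in \Lambda} [{\frak L}_\beta, {\frak L}_{-\beta}]$, and grouping the summands by connection classes gives $H_\Lambda = \sum_{[\alpha] \in \Lambda/\sim} H_{[\alpha]}$, since each $H_{[\alpha]} = \mathrm{span}_{\hu K}\{[{\frak L}_\beta, {\frak L}_{-\beta}] : \beta \in [\alpha]\}$ and every $\beta \in \Lambda$ lies in exactly one class. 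Hence, choosing a complement $\mathcal{U}$ of $H_\Lambda$ in $H$, we get
\[
H = \mathcal{U} + H_\Lambda = \mathcal{U} + \sum_{[\alpha] \in \Lambda/\sim} H_{[\alpha]}.
\]

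Next I would assemble these: since $I_{[\alpha]} = {\frak L}_{[\alpha]} = H_{[\alpha]} \oplus V_{[\alpha]}$ by the notation and Theorem \ref{teo1}-1, we have
\[
{\frak L} = H \oplus \Big(\bigoplus_{\alpha \in \Lambda} {\frak L}_\alpha\Big) = \mathcal{U} + \sum_{[\alpha]\in\Lambda/\sim} H_{[\alpha]} + \sum_{[\alpha]\in\Lambda/\sim} V_{[\alpha]} = \mathcal{U} + \sum_{[\alpha] \in \Lambda/\sim} \big(H_{[\alpha]} + V_{[\alpha]}\big) = \mathcal{U} + \sum_{[\alpha] \in \Lambda/\sim} I_{[\alpha]},
\]
which is the first assertion. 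Note that here one only claims a (not necessarily direct) sum, because the $H_{[\alpha]}$ need not be independent from each other nor from $\mathcal{U}$ beyond what the complement guarantees; so writing $+$ rather than $\oplus$ is exactly right and no further independence argument is needed.

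Finally, the product identity $[I_{[\alpha]}, I_{[\beta]}] = 0$ for $[\alpha] \neq [\beta]$ is precisely the content of Proposition \ref{nuevapro}, since $I_{[\alpha]} = {\frak L}_{[\alpha]}$ and $I_{[\beta]} = {\frak L}_{[\beta]}$; so this part requires only a citation. The main (and really the only) point requiring care is the bookkeeping in the first paragraph: one must check that the decomposition of $\Lambda$ into connection classes is genuinely a partition (guaranteed by Proposition \ref{pro1}) so that the direct sum of root spaces regroups cleanly into the $V_{[\alpha]}$, and that $H_\Lambda$ decomposes as the (possibly non-direct) sum of the $H_{[\alpha]}$; both are immediate from the definitions. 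I do not expect any genuine obstacle here — the theorem is essentially a repackaging of Propositions \ref{pro2}, \ref{nuevapro} and Theorem \ref{teo1}, together with the root space decomposition of ${\frak L}$ and the choice of the complement $\mathcal{U}$.
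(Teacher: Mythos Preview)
Your proof is correct and follows essentially the same approach as the paper: you regroup $\bigoplus_{\alpha\in\Lambda}{\frak L}_\alpha$ into the $V_{[\alpha]}$ via the partition of $\Lambda$ into $\sim$-classes, write $H_\Lambda=\sum_{[\alpha]}H_{[\alpha]}$, split $H=\mathcal{U}\oplus H_\Lambda$, and then cite Proposition~\ref{nuevapro} for the vanishing of the cross brackets. The paper's proof is the same, only more terse.
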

\begin{proof}
From
\[
{\frak L} = H \oplus \big( \bigoplus\limits_{\alpha \in
\Lambda}{\frak L}_\alpha \big )= (\mathcal{U}\oplus
{H_{\Lambda}})\oplus (\bigoplus_{\alpha \in \Lambda} {\frak
L}_{\alpha}),
\]
it follows
\[
\bigoplus\limits_{\alpha \in \Lambda}{\frak L}_\alpha =
\bigoplus\limits_{[\alpha] \in \Lambda^{/\sim}}V_{[\alpha]}, \quad
\quad H_{\Lambda} = \sum_{[\alpha] \in
\Lambda^{/\sim}}H_{[\alpha]},
\]
which implies
\[
{\frak L} = (\mathcal{U}\oplus H_{\Lambda})\oplus
(\bigoplus_{\alpha \in \Lambda}{{\frak L}_\alpha})= \mathcal{U} +
\sum\limits_{[\alpha] \in \Lambda^{/\sim}} I_{[\alpha]},
\]
where each $I_{[\alpha]}$ is an ideal of ${\frak L}$ by Theorem
\ref{teo1}. Now, given $[\alpha] \neq [\beta]$, the assertion
\[
[I_{[\alpha]},I_{[\beta]}] = 0,
\]
follows of Proposition \ref{nuevapro}.
\end{proof}

\begin{corollary}\label{coro1}
If ${\mathcal Z}({\frak L}) = 0$ and ${\frak L} = [{\frak
L},{\frak L}]$, then
\[
{\frak L} =\bigoplus\limits_{[\alpha] \in \Lambda/\sim}
I_{[\alpha]}.
\]
\end{corollary}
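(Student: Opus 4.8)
The plan is to upgrade the decomposition of Theorem~\ref{teo2} to a genuine direct sum under the two extra hypotheses. Starting from
\[
{\frak L} = \mathcal{U} + \sum_{[\alpha] \in \Lambda/\sim} I_{[\alpha]},
\]
I would first show that the hypothesis ${\frak L} = [{\frak L},{\frak L}]$ forces $\mathcal{U} = 0$. Indeed, $[{\frak L},{\frak L}]$ is spanned by brackets of the summands; by Proposition~\ref{nuevapro} cross-brackets $[I_{[\alpha]},I_{[\beta]}]$ vanish for $[\alpha]\neq[\beta]$, and brackets involving $\mathcal{U}$ contribute nothing new to $H$ because $\mathcal{U}\subseteq H$ acts on root spaces but $[\mathcal{U},H]\subseteq[H,H]=0$ and $[\mathcal{U},{\frak L}_\beta]\subseteq{\frak L}_\beta\subseteq V_{[\beta]}$. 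Hence $[{\frak L},{\frak L}]\subseteq \sum_{[\alpha]} I_{[\alpha]}$, and since this equals ${\frak L} = \mathcal{U}\oplus(\text{rest})$, comparing with the direct sum ${\frak L} = (\mathcal{U}\oplus H_\Lambda)\oplus(\bigoplus_\alpha {\frak L}_\alpha)$ gives $\mathcal{U}=0$. So ${\frak L} = \sum_{[\alpha]\in\Lambda/\sim} I_{[\alpha]}$.

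Next I would check that this sum is direct. The root-space part is already direct, $\bigoplus_{\alpha\in\Lambda}{\frak L}_\alpha = \bigoplus_{[\alpha]}V_{[\alpha]}$, as noted in the proof of Theorem~\ref{teo2}; the only issue is the $H$-component, i.e. whether $H_\Lambda = \sum_{[\alpha]}H_{[\alpha]}$ is direct. Suppose $h_{[\alpha]} + \sum_{[\beta]\neq[\alpha]} h_{[\beta]} = 0$ with $h_{[\alpha]}\in H_{[\alpha]}$ and only finitely many nonzero. I want to use ${\mathcal Z}({\frak L})=0$: for any root $\gamma\in[\alpha]$ and any $v_\gamma\in{\frak L}_\gamma$, by Proposition~\ref{nuevapro} we have $[h_{[\beta]},v_\gamma]=0$ for $[\beta]\neq[\alpha]$, so $[\,h_{[\alpha]} + \sum_{[\beta]\neq[\alpha]}h_{[\beta]},\,v_\gamma\,] = [h_{[\alpha]},v_\gamma]$, and similarly on the other side; thus the element $\sum_{[\beta]\neq[\alpha]}h_{[\beta]} = -h_{[\alpha]}$ annihilates $V_{[\alpha]}$ on both sides. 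It also annihilates every $V_{[\gamma]}$ with $[\gamma]\neq[\alpha]$ by the same Proposition~\ref{nuevapro} applied to each summand. Finally it annihilates $H$ since $[H,H]=0$. Hence $h_{[\alpha]}\in{\mathcal Z}({\frak L})=0$, and as $[\alpha]$ was arbitrary every component vanishes, so the sum is direct.

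Combining the two steps yields ${\frak L} = \bigoplus_{[\alpha]\in\Lambda/\sim} I_{[\alpha]}$, as claimed. The main obstacle is the directness argument for the $H_\Lambda$-part: one must be careful that an element of $H_{[\alpha]}$ which kills \emph{all} root spaces and all of $H$ (from both sides, since Leibniz superalgebras are not (super)commutative) genuinely lies in the center — this is where $[H,H]=0$, the grading decomposition $H_{[\alpha]}\subseteq H_{\bar 0}\oplus H_{\bar 1}$, and Proposition~\ref{nuevapro} all have to be invoked together. The vanishing of $\mathcal{U}$ is comparatively routine, amounting to bookkeeping with the direct-sum decomposition of ${\frak L}$ and the fact that $\mathcal{U}$ never appears in the image of any bracket.
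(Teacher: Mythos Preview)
Your proposal is correct and follows essentially the same approach as the paper: show $\mathcal{U}=0$ from $[\mathfrak{L},\mathfrak{L}]\subseteq\sum I_{[\alpha]}$, then use Proposition~\ref{nuevapro} together with $\mathcal{Z}(\mathfrak{L})=0$ for directness. The only cosmetic difference is that the paper handles directness by taking a general $x\in I_{[\alpha]}\cap\sum_{[\beta]\neq[\alpha]}I_{[\beta]}$ and observing at once that $[x,I_{[\alpha]}]=0$ (since $x$ lies in the other summands) and $[x,\sum_{[\beta]\neq[\alpha]}I_{[\beta]}]=0$ (since $x\in I_{[\alpha]}$), whence $x\in\mathcal{Z}(\mathfrak{L})=0$; you instead split off the $V$-part (already direct) and argue on $H_\Lambda$ alone, which amounts to the same computation restricted to the $H$-component.
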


\begin{proof}
Since $[{\frak L},{\frak L}] = {\frak L}$, Theorem \ref{teo2}
applies to get
\[
\left[\mathcal{U} + \sum_{[\alpha] \in \Lambda/\sim} I_{[\alpha]},
\, \, \, \mathcal{U} + \sum_{[\alpha] \in \Lambda/\sim}
I_{[\alpha]}\right] = \mathcal{U} + \sum_{[\alpha] \in
\Lambda/\sim} I_{[\alpha]}.
\]
Keeping in mind that
\[
\mathcal{U} \subset H \quad \mbox{ and } \quad [
I_{[\alpha]},I_{[\beta]}]  = 0, \, \, \mbox{ if } \, \, [\alpha]
\neq [\beta],
\]
we get $\mathcal{U} = 0$, that is,
\[
{\frak L} = \sum_{[\alpha] \in \Lambda/\sim} I_{[\alpha]}.
\]
To finish, we show the direct character of the sum. Given $x\in
I_{[\alpha]} \cap \sum\limits_{\tiny{\begin{array}{c}
  [\beta] \in \Lambda / \sim  \\
\beta \nsim \alpha\\
\end{array}}} I_{[\beta]}$
then using again the equation $\, \, [I_{[\alpha]},I_{[\beta]}]  =
0$, for $[\alpha] \neq [\beta]$, we obtain 
\begin{align*}
\big[x, I_{[\alpha]}\big] + \left[x, \sum_{\tiny{\begin{array}{c}
[\beta] \in \Lambda / \sim  \\
\beta \nsim \alpha\\
\end{array}}} I_{[\beta]}
\right] & = 0,
\\
&
\\
\big[I_{[\alpha]},x\big] + \left[ \sum_{\tiny{\begin{array}{c}
[\beta] \in \Lambda / \sim  \\
\beta \nsim \alpha\\
\end{array}}} I_{[\beta]}
,x\right] & = 0.
\end{align*}
It implies $[x, {\frak L}]+ [{\frak L}, x]  = 0$, that is, $x \in
{\mathcal Z}({\frak L}) = 0$. Thus $x = 0$, as desired.
\end{proof}

\section{Split Leibniz superalgebras of maximal length. The simple components}

In this section we focus on the simplicity of split Leibniz
superalgebras by centering our attention in those of  maximal
length. This terminology is taking borrowed from the theory of
gradations of Lie and Leibniz algebras, (see \cite{Alberiogradu,
Diamante, Gomez1, Gomez2}). See also \cite{Diamante, Yo1,
Yotriple1, Yo2, Yo3, Camacho, Neeb} for examples.

\begin{definition}\rm
We say that a split Leibniz superalgebra ${\frak L}$ is of {\it
maximal length} if $\dim ({{\frak L}}_{\alpha, \bar{i}}) \in \{0,
1\}$ for any $\alpha \in \Lambda,$ $ \bar{i} \in {\hu Z}_2.$
\end{definition}

Our target is to characterize the simplicity of ${\frak L}$ in
terms of connectivity properties in $\Lambda$. Therefore  we would
like to attract attention to  the definition of simple Leibniz
superalgebra given in Definition \ref{Defsimple}, and the previous
discussion.

\medskip

The following lemma is consequence of the fact that the set of
multiplications by elements in $H_{\bar 0}$ is a commuting set of
diagonalizable endomorphisms and $I$ is invariant under this set.

\begin{lemma}\label{lema5}
Let ${\frak L}=H \oplus (\bigoplus\limits_{\alpha \in \Lambda}
{\frak L}_{\alpha})$ be a split  Leibniz superalgebra. If $I$ is
an ideal of ${\frak L}$ then $I= (I\cap H )\oplus
(\bigoplus\limits_{\alpha \in \Lambda} (I \cap {\frak
L}_{\alpha})).$
\end{lemma}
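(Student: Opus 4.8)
The plan is to exploit that an ideal is invariant under the commuting family of right multiplications by elements of $H_{\bar 0}$, and that the root space decomposition ${\frak L} = \bigoplus_{\alpha \in \Lambda \cup \{0\}}{\frak L}_{\alpha}$ (recall $H = {\frak L}_0$) is precisely the simultaneous eigenspace decomposition of this family. Concretely, for $h \in H_{\bar 0}$ write $R_h : {\frak L} \to {\frak L}$, $R_h(v) = [v,h]$; since $[I,{\frak L}] \subset I$ we have $R_h(I) \subset I$ for every $h \in H_{\bar 0}$, and by Definition~\ref{def2} the restriction of $R_h$ to ${\frak L}_{\alpha}$ is multiplication by the scalar $\alpha(h)$.

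First I would take an arbitrary $x \in I$ and write, using the root space decomposition, $x = v_0 + v_{\alpha_1} + \cdots + v_{\alpha_n}$ with $v_0 \in H$, $v_{\alpha_i} \in {\frak L}_{\alpha_i}$ and $\alpha_1, \dots, \alpha_n \in \Lambda$ pairwise distinct; the goal is to show $v_0 \in I$ and each $v_{\alpha_i} \in I$, since then $x$ lies in $(I \cap H) \oplus \bigoplus_{\alpha}(I \cap {\frak L}_{\alpha})$, the reverse inclusion being trivial and the directness of the sum being inherited from that of ${\frak L} = H \oplus \bigoplus_{\alpha} {\frak L}_{\alpha}$. Note that it suffices to prove each $v_{\alpha_i} \in I$, because then $v_0 = x - \sum_i v_{\alpha_i} \in I$ as well.

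To isolate the components I would run a Vandermonde-type argument. Fixing $h_0 \in H_{\bar 0}$, the elements $R_{h_0}^k(x) = \sum_{i} \alpha_i(h_0)^k\, v_{\alpha_i}$ (for $k = 0, 1, 2, \dots$, with the convention $\alpha_0 := 0$) all lie in $I$; grouping the indices $i$ according to the value $\alpha_i(h_0)$ and using that the Vandermonde matrix attached to the \emph{distinct} scalars occurring as such values is invertible over an arbitrary field, I obtain that each partial sum $\sum_{i :\, \alpha_i(h_0) = c} v_{\alpha_i}$ belongs to $I$. If $h_0$ happens to separate all the $\alpha_i$ (and $0$) this already finishes the argument; in general I would iterate: for a group containing two distinct roots $\alpha_i \neq \alpha_j$ there is $h_1 \in H_{\bar 0}$ with $\alpha_i(h_1) \neq \alpha_j(h_1)$, and repeating the previous step with $h_1$ inside that group strictly refines the partition of $\{0,1,\dots,n\}$ by common eigenvalue. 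Since there are only finitely many components, after finitely many refinements each class is a singleton, whence $v_{\alpha_i} \in I$ for all $i$ (and $v_0 \in I$).

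The only delicate point, and the step I would be most careful with, is precisely this last iteration: over an arbitrary — possibly finite — base field a single $R_{h_0}$ need not have pairwise distinct eigenvalues on the span of the components of $x$, so one cannot simply pick one ``generic'' $h_0$; the finite refinement procedure above (equivalently, the standard fact that a subspace invariant under a commuting family of diagonalizable endomorphisms is the direct sum of its intersections with the common eigenspaces) is what makes the proof work in full generality. Everything else is routine bookkeeping, compatible with the $\mathbb{Z}_2$-grading of $I$ via the decomposition ${\frak L}_{\alpha} = {\frak L}_{\alpha,\bar 0} \oplus {\frak L}_{\alpha,\bar 1}$ recorded in Equation~(\ref{separa}).
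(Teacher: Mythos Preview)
Your proof is correct and follows exactly the approach the paper indicates: the paper does not give a detailed argument but merely states that the lemma ``is consequence of the fact that the set of multiplications by elements in $H_{\bar 0}$ is a commuting set of diagonalizable endomorphisms and $I$ is invariant under this set,'' which is precisely the standard fact you spell out via the Vandermonde/iterated-refinement argument. Your care with the finite-field case is appropriate and makes explicit what the paper leaves implicit.
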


\medskip

From now on ${\frak L} = H \oplus (\bigoplus\limits_{\alpha \in
\Lambda}{\frak L}_{\alpha})$ denotes a split Leibniz superalgebra
of maximal length without further mention. We begin by observing
that in this case Lemma \ref{lema5} allows us to assert that given
any nonzero (graded) ideal $I=I_{\bar 0}\oplus I_{{\bar 1}}$ of
${\frak L}$ then

$$I = (I \cap H) \oplus (\bigoplus\limits_{\alpha \in \Lambda} I
\cap {\frak L}_{\alpha})=$$ $$(I \cap H) \oplus
(\bigoplus\limits_{\alpha \in \Lambda} ((I_{\bar{0}} \cap {\frak
L}_{\alpha, {\bar 0}}) \oplus (I_{{\bar 1}} \cap {\frak
L}_{\alpha, {\bar 1}})))=$$

\begin{equation}\label{max}
(I \cap H) \oplus (\bigoplus\limits_{\alpha \in
\Lambda_{\bar{0}}^{I}} {\frak L}_{\alpha, \bar{0}}) \oplus
(\bigoplus\limits_{\beta \in \Lambda_{\bar{1}}^{I}} {\frak
L}_{\beta, \bar{1}})
\end{equation}
where $\Lambda_{\bar{i}}^{I} := \{\gamma \in \Lambda : I_{\bar i}
\cap {\frak L}_{\gamma, \bar{i}} \neq 0\}$ for $\bar{i} \in {\hu
Z}_2$. In the particular, (an important) case $I = {\frak I}$, we
get

\begin{equation}\label{I}
{\frak I} = ({\frak I} \cap H) \oplus (\bigoplus\limits_{\alpha
\in \Lambda_{\bar{0}}^{{\frak I}}} {\frak L}_{\alpha, \bar{0}})
\oplus (\bigoplus\limits_{\beta \in \Lambda_{\bar{1}}^{{\frak I}}}
{\frak L}_{\beta, \bar{1}})
\end{equation}
with $$\Lambda_{\bar{i}}^{\frak I} := \{\gamma \in \Lambda :
{\frak I}_{\bar{i}} \cap {\frak L}_{\gamma, \bar{i}} \neq
0\}=\{\gamma \in \Lambda : 0\neq {\frak L}_{\gamma, \bar{i}}
\subset {\frak I}_{\bar{i}} \},$$ $\bar{i} \in {\hu Z}_2$.

\smallskip

From here, we can write
\begin{equation}\label{separa2}
\Lambda = \underbrace{(\Lambda_{\bar{0}}^{{\frak I}}\dot{\cup}
\Lambda_{\bar{0}}^{\neg{\frak I}})}_{\Lambda_{\bar 0}} \cup
\underbrace{(\Lambda_{\bar{1}}^{{\frak I}}\dot{\cup}
\Lambda_{\bar{1}}^{\neg{\frak I}})}_{\Lambda_{\bar 1}}
\end{equation}
where $$\hbox{$\Lambda_{\bar{i}}^{\neg{\frak I}} := \{\gamma \in
\Lambda: {\frak L}_{\gamma,\bar{i}} \neq 0$ and ${\frak I}_ {\bar
i} \cap {\frak L}_{\gamma,\bar{i}}  = 0\}$}$$ for $\bar{i} \in
{\hu Z}_2$. We will also denote
$$\hbox{$\Lambda^{\Upsilon}:=\Lambda_{\bar{0}}^{{\Upsilon}} \cup
\Lambda_{\bar{1}}^{{\Upsilon}}$, for $\Upsilon \in \{{\frak I},
\neg {\frak I}\}$.}$$

Hence, we can write
\begin{equation}\label{Llargo}
{\frak L} = (H_{\bar{0}} \oplus H_{\bar{1}}) \oplus
(\bigoplus\limits_{\alpha \in \Lambda_{\bar{0}}^{ {\frak I}}}
{\frak L}_{\alpha, \bar{0}}) \oplus
 (\bigoplus\limits_{\beta \in
\Lambda_{\bar{0}}^{\neg{\frak I}}}
 {\frak L}_{\beta, \bar{0}})
\oplus (\bigoplus\limits_{\gamma \in \Lambda_{\bar{1}}^{ {\frak
I}}} {\frak L}_{\gamma, \bar{1}}) \oplus
 (\bigoplus\limits_{\delta \in
\Lambda_{\bar{1}}^{\neg{\frak I}}}
 {\frak L}_{\delta, \bar{1}}).
\end{equation}

\smallskip

\begin{example}\label{example3}\rm
Consider the 5-dimensional split Leibniz superalgebra  $${\frak
L}=H \oplus {\frak L}_{\alpha} \oplus {\frak L}_{-\alpha}\oplus
{\frak L}_{\beta}\oplus {\frak L}_{-\beta}$$ given in Example
\ref{example1}, where ${\frak L}_{\bar 0}=H \oplus {\frak
L}_{\alpha} \oplus {\frak L}_{-\alpha}$ and ${\frak L}_{\bar
1}={\frak L}_{\beta}\oplus {\frak L}_{-\beta}$. This is a split
Leibniz superalgebra of maximal length such  that ${\frak
I}=\langle e_1, e_2 \rangle$. Hence $\Lambda^{\neg \frak I}_{ \bar
0} = \{\alpha,-\alpha\}, \Lambda^{\frak I}_{\bar 1} =
\{\beta,-\beta\}$ and $\Lambda^{\frak I}_{ \bar 0}=\Lambda^{\neg
\frak I}_{ \bar 1}= \emptyset$.
\end{example}

\smallskip

\begin{example}\label{example4}\rm
Consider the $(n+4)$-dimensional split Leibniz superalgebra
$${\frak L}=H \oplus {\frak L}_{\alpha} \oplus {\frak
L}_{-\alpha}\oplus (\bigoplus\limits_{k=0}^{n}{\frak
L}_{\beta_k})$$ given in Example \ref{example2}, where ${\frak
L}_{\bar 0}=H \oplus {\frak L}_{\alpha} \oplus {\frak
L}_{-\alpha}$ and ${\frak L}_{\bar
1}=\bigoplus\limits_{k=0}^{n}{\frak L}_{\beta_k}$. This is a split
Leibniz superalgebra of maximal length satisfying  ${\frak
I}=\langle e_0,..., e_n \rangle$. From here, $\Lambda^{\neg \frak
I}_{\bar 0}=\{\alpha,-\alpha\}$ and $\Lambda^{\frak I}_{\bar
1}=\{e_0,...,e_n\}$.
\end{example}

\smallskip

\begin{remark}\label{re3.1}\rm
Since our aim is this section to characterize the simplicity of
${\frak L}$, in terms of connectivity of roots, Theorem
\ref{teo1}-2
gives us that we have to center our attention in those split
Leibniz superalgebras satisfying  $H = \sum\limits_{\beta \in
\Lambda} [{\frak L}_{\beta}, {\frak L}_{-\beta}]$. This is for
instance the case whence ${\frak L}=[{\frak L},{\frak L}]$.  We
would like to note that if $H = \sum\limits_{\beta \in \Lambda}
[{\frak L}_{\beta}, {\frak L}_{-\beta}]$, then the decomposition
given by Equation (\ref{Llargo}) and Equation (\ref{equi}) show
$$\small{H} =$$
\begin{equation}\label{equsimple}
\small{(\underbrace{\sum\limits_{\alpha \in \Lambda_{\bar 0}^{\neg
{\frak I}}} [{\frak L}_{-\alpha,\bar{0}},{\frak
L}_{\alpha,\bar{0}}]+ \sum\limits_{\alpha \in \Lambda_{\bar
1}^{\neg {\frak I}}}[{\frak L}_{-\alpha,\bar{1}},{\frak
L}_{\alpha,\bar{1}}]}_{H_{\bar{0}}}) \oplus
(\underbrace{\sum\limits_{\alpha \in \Lambda_{\bar 0}^{\neg {\frak
I}}} [{\frak L}_{-\alpha,\bar{1}},{\frak L}_{\alpha,\bar{0}}] +
\sum\limits_{\alpha \in \Lambda_{\bar 1}^{\neg {\frak I}}} [{\frak
L}_{-\alpha,\bar{0}},{\frak L}_{\alpha,\bar{1}}]}_{H_{\bar{1}}})}.
\end{equation}
\end{remark}

\medskip

Now, observe that the concept of connectivity of nonzero roots
given in Definition \ref{con} is not strong enough to detect if a
given $\alpha \in \Lambda$ belongs to $\Lambda_{\bar i}^{{{\frak
I}}}$ or to $\Lambda_{\bar i}^{\neg{{\frak I}}}$ for some ${\bar
i} \in {\mathbb Z}_2$. Consequently we lose the information
respect to whether a given root space ${{\frak L}}_{\alpha}$
intersects to ${{\frak I}}$ in a non-trivial way or not, which is
fundamental to study the simplicity of ${{\frak L}}$. So, we are
going to refine the previous  concept of connections of non-zero
roots.

\begin{definition}\label{def_cone_S}\rm
Let $\alpha \in \Lambda_{\bar i}^{\Upsilon}$ and $ \beta \in
\Lambda_{\bar j}^{\Upsilon}$ with $\Upsilon \in \{ {\frak I},
\neg{\frak I} \}$ and ${\bar i}, {\bar j} \in {\mathbb Z}_2$. We
say that $\alpha$ is {\it ${\neg{\frak I}}$-connected} to $\beta,$
denoted by $\alpha \sim_{\neg{\frak I}} \beta,$ if either $\beta
\in \{\alpha,-\alpha\}$ or there exists a family of nonzero roots
$\alpha_1,\alpha _2,...,\alpha_n $ such that $$\alpha_k \in
\Lambda_{{\bar i}_k}^{\neg{\frak I}}$$ for some ${\bar i}_k \in
{\mathbb Z }_2$ and for any $k=2,...,n$; and such that
\begin{itemize}
\item[1.] $\alpha_1 = \alpha$.
\item[2.] $ \alpha_1+\alpha_2 \in  \Lambda_{{\bar i}+ {\bar
i_2}}^{\Upsilon} ,$\\
$ \alpha_1+\alpha_2 +\alpha_3\in  \Lambda_{{\bar i}+ {\bar
i_2}+{\bar i_3}}^{\Upsilon} ,$\\
$\vdots$\\
 $\alpha_1+\alpha_2+\cdots+\alpha_{n-1} \in
\Lambda_{{\bar i}+ {\bar i_2}+\cdots+{\bar i_{n-1}}}^{\Upsilon},
$\\
$\alpha_1+\alpha_2+\cdots+\alpha_{n-1}+\alpha_n \in \Lambda_{{\bar
i}+ {\bar i_2}+\cdots+{\bar i_{n-1}}+ {\bar i_n}}^{\Upsilon}$

and \item[3.]$\alpha_1 + \alpha_2+\cdots + \alpha_{n-1}+\alpha_n
=\beta$ and ${\bar i}+ {\bar i_2}+\cdots+{\bar i_{n-1}}+ {\bar
i_n}={\bar j}$.
\end{itemize}
The set $\{\alpha_1,\alpha_2,..., \alpha_n\}$ is called a {\it
${\neg{\frak I}}$-connection} from $\alpha$ to $\beta$.
\end{definition}

\smallskip


Let us introduce the notion of root-multiplicativity in the
framework of split Leibniz superalgebras of maximal length, in a
similar way to the ones for split Lie algebras, split Lie
superalgebras and  split Leibniz algebras among other split
algebraic structures, (see \cite{Yo1, Yo5, Yo2, Yo4, Yo3}  for
these notions and examples).

\begin{definition}\label{defmulti}
We say that a split Leibniz superalgebra of maximal length ${\frak
L}$ is {\it root-multiplicative} if the below conditions hold.
\begin{enumerate}
\item Given $\alpha \in {\Lambda}_{\bar{i}}^{{\neg {\frak I}}},
\beta \in {\Lambda}_{\bar{j}}^{{\neg {\frak I}}}$ such that
$\alpha+\beta \in \Lambda$ then $[{\frak L}_{\alpha,\bar{i}},
{\frak L}_{\beta, \bar{j}}]\neq 0.$ \item Given $\alpha \in
{\Lambda}_{\bar{i}}^{\neg {\frak I}}$ and $\gamma \in
{\Lambda}_{\bar{j}}^{{\frak I}}$ such that $\alpha+\gamma \in
{\Lambda}^{{\frak I}}$ then $[{\frak L}_{\gamma, \bar{j}},{\frak
L}_{\alpha, \bar{i}}] \neq 0.$
\end{enumerate}
\end{definition}

\begin{notation}\rm
We will say that $\Lambda^{\Upsilon}$, $\Upsilon \in \{{\frak I},
\neg {\frak I}\}$, has all of its roots {\it ${\neg {\frak
I}}$-connected} if for any ${\bar i}, {\bar j} \in {\mathbb Z}_2$
we have that $\Lambda^{\Upsilon}_{\bar i}$ has all of its roots
connected to any root in $\Lambda^{\Upsilon}_{\bar j}$.
\end{notation}

\begin{proposition}\label{nueva12}
Suppose $H = \sum\limits_{\beta \in \Lambda} [{\frak L}_{\beta},
{\frak L}_{-\beta}]$ and ${\frak L}$ is root-multiplicative.
 If  $\Lambda^{\neg {\frak
I}}$  has all of its roots ${\neg {\frak I}}$-connected and
$|\Lambda^{\neg {\frak I}}|>2$,  then any nonzero ideal $I$ of
${\frak L}$ such that $I \nsubseteq H + {\frak I}$
satisfies that  $I={\frak L}$
\end{proposition}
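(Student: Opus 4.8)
The plan is to show that an ideal $I$ with $I \nsubseteq H + {\frak I}$ must, on the one hand, contain an entire root space ${\frak L}_{\alpha_0, \bar{i}_0}$ with $\alpha_0 \in \Lambda_{\bar{i}_0}^{\neg {\frak I}}$, and then, using $\neg{\frak I}$-connectedness together with root-multiplicativity, propagate this to show $I$ contains every root space ${\frak L}_{\beta, \bar{j}}$ with $\beta \in \Lambda^{\neg {\frak I}}_{\bar j}$, and finally that $I \supseteq H$ and $I \supseteq {\frak I}$, so $I = {\frak L}$.

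First I would use Lemma \ref{lema5} (via Equation (\ref{max})) to write $I = (I\cap H) \oplus (\bigoplus_{\alpha \in \Lambda_{\bar 0}^I}{\frak L}_{\alpha,\bar 0}) \oplus (\bigoplus_{\beta \in \Lambda_{\bar 1}^I}{\frak L}_{\beta,\bar 1})$. The hypothesis $I \nsubseteq H + {\frak I}$ forces some $\Lambda_{\bar{i}_0}^I$ to contain a root $\alpha_0 \notin \Lambda_{\bar{i}_0}^{\frak I}$, i.e. $\alpha_0 \in \Lambda_{\bar{i}_0}^{\neg {\frak I}}$, and by maximal length ${\frak L}_{\alpha_0,\bar{i}_0} = \langle v_{\alpha_0,\bar{i}_0}\rangle \subset I$. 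Now take any $\beta \in \Lambda^{\neg{\frak I}}_{\bar j}$. By hypothesis $\alpha_0$ is $\neg{\frak I}$-connected to $\beta$, so there is a $\neg{\frak I}$-connection $\{\alpha_0 = \alpha_1, \alpha_2, \dots, \alpha_n\}$; I would push $v_{\alpha_0,\bar{i}_0}$ along this chain by multiplying successively by nonzero elements of the one-dimensional spaces ${\frak L}_{\alpha_k,\bar{i}_k}$. At each step the partial sum lies in $\Lambda^{\neg{\frak I}}$ (this is where $\Upsilon = \neg{\frak I}$ in the connection matters) so root-multiplicativity, condition 1, guarantees the bracket $[{\frak L}_{\alpha_1+\cdots+\alpha_{k-1},\,\cdot}, {\frak L}_{\alpha_k,\bar{i}_k}]$ is nonzero; since $I$ is an ideal this bracket stays inside $I$, and by maximal length it equals the full $1$-dimensional space ${\frak L}_{\alpha_1+\cdots+\alpha_k,\,\cdot}$. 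After $n$ steps we obtain $0 \neq {\frak L}_{\beta,\bar j} \subset I$. Hence $I \supseteq \bigoplus_{\beta \in \Lambda^{\neg{\frak I}}}{\frak L}_{\beta}$ (using also that $-\beta \in \Lambda^{\neg{\frak I}}$ when $\beta$ is and applying the same argument, or directly since $-\beta$ is $\neg{\frak I}$-connected to $\alpha_0$ too).

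Next I would recover $H$. Since $\beta$ and $-\beta$ both lie in $\Lambda^{\neg{\frak I}}$ and both root spaces are now inside $I$, I get $[{\frak L}_{-\beta},{\frak L}_{\beta}] \subset I$ for every $\beta \in \Lambda^{\neg{\frak I}}$; by the hypothesis $H = \sum_{\gamma\in\Lambda}[{\frak L}_\gamma,{\frak L}_{-\gamma}]$ together with Equation (\ref{equsimple}) of Remark \ref{re3.1} — which expresses all of $H$ purely in terms of brackets $[{\frak L}_{-\alpha},{\frak L}_\alpha]$ with $\alpha \in \Lambda^{\neg{\frak I}}$ — we conclude $H \subset I$. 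Then for any $\gamma \in \Lambda^{\frak I}_{\bar k}$, pick $h \in H_{\bar 0}$ with $\gamma(h) \neq 0$; since $H \subset I$ we get ${\frak L}_{\gamma,\bar k} = [{\frak L}_{\gamma,\bar k}, h] \cdot \gamma(h)^{-1} \subset [I, H] \subset I$, forcing ${\frak I}\subset I$ as well. Altogether $I \supseteq H \oplus (\bigoplus_{\alpha\in\Lambda^{\neg{\frak I}}}{\frak L}_\alpha) \oplus (\bigoplus_{\gamma\in\Lambda^{\frak I}}{\frak L}_\gamma) = {\frak L}$.

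The main obstacle I anticipate is the bookkeeping in the propagation step: one must verify that every intermediate partial sum $\alpha_1 + \cdots + \alpha_k$ is genuinely a root in $\Lambda^{\neg{\frak I}}$ with the parity $\bar{i}_1 + \cdots + \bar{i}_k$ (so that the correct component ${\frak L}_{\cdot,\,\bar{i}_1+\cdots+\bar{i}_k}$ is produced and stays disjoint from ${\frak I}$), and that the nonzero bracket produced by root-multiplicativity lands in exactly that graded component — this uses Lemma \ref{lema1}. A secondary subtlety is that condition 2 of root-multiplicativity (rather than condition 1) may be needed if a connection is allowed to pass through $\Lambda^{\frak I}$; but since here $\alpha_0 \in \Lambda^{\neg{\frak I}}$ and $\beta \in \Lambda^{\neg{\frak I}}$ and the definition of $\neg{\frak I}$-connection keeps all intermediate sums in the same $\Upsilon$, the chain stays in $\Lambda^{\neg{\frak I}}$ throughout, so only condition 1 is required. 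One should also double-check the case $n = 1$ (i.e. $\beta \in \{\alpha_0, -\alpha_0\}$), which is immediate, and confirm the hypothesis $|\Lambda^{\neg{\frak I}}| > 2$ is what rules out degenerate situations where $\Lambda^{\neg{\frak I}}$ is too small for Equation (\ref{equsimple}) to reconstruct all of $H$.
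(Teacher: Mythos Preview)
Your overall strategy is the same as the paper's, but there is a genuine gap in the propagation step that you dismiss too quickly. You write that the case $\beta \in \{\alpha_0,-\alpha_0\}$ ``is immediate''. It is not. By Definition~\ref{def_cone_S}, when $\beta \in \{\alpha,-\alpha\}$ the $\neg{\frak I}$-connection is declared to exist \emph{trivially}, with no chain $\{\alpha_1,\dots,\alpha_n\}$ to multiply along. So if, say, $-\alpha_0 \in \Lambda^{\neg{\frak I}}_{\bar k}$ for some $\bar k$, or $\alpha_0 \in \Lambda^{\neg{\frak I}}_{\bar i_0 + \bar 1}$, your argument produces nothing: you have no bracket to apply and no reason yet that ${\frak L}_{-\alpha_0,\bar k}$ or ${\frak L}_{\alpha_0,\bar i_0+\bar 1}$ sits inside $I$. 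Without these root spaces you cannot invoke Equation~(\ref{equsimple}) to conclude $H\subset I$, since that equation runs over \emph{all} $\alpha\in\Lambda^{\neg{\frak I}}$.

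This is exactly where the hypothesis $|\Lambda^{\neg{\frak I}}|>2$ enters, and you have misidentified its role. It is not needed for Equation~(\ref{equsimple}) to hold (that identity is valid regardless of cardinality). Rather, $|\Lambda^{\neg{\frak I}}|>2$ guarantees the existence of some $\beta\in\Lambda^{\neg{\frak I}}$ with $\beta\notin\{\alpha_0,-\alpha_0\}$; your propagation then gives ${\frak L}_{\beta,\bar j}\subset I$, and now you can restart the whole argument from $\beta$ in place of $\alpha_0$. Since $\pm\alpha_0\notin\{\beta,-\beta\}$, the $\neg{\frak I}$-connection from $\beta$ to $\pm\alpha_0$ is a genuine chain along which root-multiplicativity applies, yielding ${\frak L}_{\pm\alpha_0,\bar k}\subset I$ for every relevant parity $\bar k$. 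Only then do Equations~(\ref{ootraa}) and~(\ref{equsimple}) combine to give $H\subset I$, after which your final step recovering the ${\frak I}$-root spaces via $[{\frak L}_{\gamma,\bar k},H_{\bar 0}]$ is correct.
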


\begin{proof}
Since $I \nsubseteq H + {\frak I}$, there exists $\alpha_0 \in
\Lambda^{\neg J}_{{\bar i}_0}$   such that
\begin{equation}\label{I2}
0\neq {\frak L}_{\alpha_0,{\bar i}_0} \subset I.
\end{equation}
for some ${\bar i}_0 \in {\hu Z}_2$, (see Equation
(\ref{Llargo})). Given now any  $\alpha \in \Lambda^{\neg {\frak
I}}_{\bar j} \setminus \{\alpha_0,-\alpha_0\}$, being then  $0
\neq {\frak L}_{\alpha, {\bar j}}$, the fact that $\alpha_0$ and
$\alpha$ are ${\neg {\frak I}}$-connected
gives us a ${\neg {\frak I}}$-connection
$\{\delta_1,\delta_2,...,\delta_n\} $ from $\alpha_0$ to $\alpha$
such that $$\delta_1 = \alpha_0 \in \Lambda_{{\bar
i}_0}^{\neg{\frak I}}, \hspace{0.1cm} \delta_k \in \Lambda_{{\bar
i}_k}^{\neg{\frak I}}\hspace{0.05cm} {\rm for}\hspace{0.05cm}
k=2,...,n,$$
$$\delta_1+\delta_2 \in \Lambda_{{\bar i_0}+ {\bar i_2}}^{\neg
{\frak I}} ,..., \delta_1+\delta_2+\cdots+\delta_{n-1} \in
\Lambda_{{\bar i_0}+ {\bar i_2}+\cdots+{\bar i_{n-1}}}^{\neg{\frak
I}},\hspace{0.1cm} {\rm and}$$
$$\delta_1+\delta_2+\cdots+\delta_{n-1}+\delta_n \in
\Lambda_{{\bar i_0}+ {\bar i_2}+\cdots+{\bar i_{n-1}}+ {\bar
i_n}}^{\neg{\frak I}}$$ with $\delta_1 + \delta_2+\cdots +
\delta_{n-1}+\delta_n =\alpha$ and ${\bar i_0}+ {\bar
i_2}+\cdots+{\bar i_{n-1}}+ {\bar i_n}={\bar j}$.

\smallskip

Consider $\delta_1, \delta_2$ and $\delta_1 + \delta_2$. Since
$\delta_1 = \alpha_0 \in \Lambda_{{\bar i}_0}^{\neg {\frak I}}$,
$\delta_2 \in \Lambda_{{\bar i}_2}^{\neg {\frak I}}$ and $\delta_1
+ \delta_2\in \Lambda$, the root-multiplicativity and maximal
length of ${\frak L}$ show $$0 \neq [{\frak L}_{\delta_1,{{\bar
i}_0}}, {\frak L}_{\delta_2,{{\bar i}_2}}] = {\frak
L}_{\delta_1+\delta_2,{\bar i}_0}+{{\bar i}_2},$$ and by Equation
(\ref{I2}) $$0 \neq {\frak L}_{\delta_1+\delta_2,{\bar i}_0+{\bar
i}_2} \subset I.$$

We can argue in a similar way from $\delta_1+\delta_2,\delta_3$
and $\delta_1+\delta_2+\delta_3$. That is, $\delta_1+\delta_2 \in
\Lambda_{{\bar i}_0+{\bar i}_2}^{\neg {\frak I}}, \delta_3 \in
\Lambda_{{\bar i}_3}^{\neg {\frak I}}$ and
$\delta_1+\delta_2+\delta_3 \in \Lambda.$ Hence $$0 \neq [{\frak
L}_{\delta_1+\delta_2,{\bar i}_0+{\bar i}_2}, {\frak
L}_{\delta_3,{\bar i}_3}] = {\frak
L}_{\delta_1+\delta_2+\delta_3,{\bar i}_0+{\bar i}_2+{\bar
i}_3},$$ and by the above $$0 \neq {\frak L}_{\delta_1 + \delta_2
+ \delta_3, {\bar i}_0+{\bar i}_2+{\bar i}_3} \subset I.$$

Following this process with the ${\neg {\frak I}}$-connection
$\{\delta_1,...,\delta_n\}$ we obtain that
$$0 \neq{\frak L}_{\delta_1+\delta_2+\delta_3+\cdots+\delta_n,{\bar i_0}+ {\bar
i_2}+\cdots+{\bar i_{n-1}}+ {\bar i_n}} \subset I$$ and so ${\frak
L}_{\alpha,{\bar j}} \subset I$. That is, for any $\alpha \in
\Lambda^{\neg {\frak I}}_{\bar j}\setminus \{\alpha_0,-\alpha_0\}$
and for each ${\bar j} \in {\mathbb Z}_2$, we have
\begin{equation}\label{ootraa}
{\frak L}_{\alpha,{\bar j}} \subset I.
\end{equation}

Let us now verify that in case $0\neq {\frak L}_{\epsilon
\alpha_0, \bar k}$ for some $\epsilon \in \{1,-1\}$ and ${\bar k}
\in {\mathbb Z}_2$, we have $0\neq {\frak L}_{\epsilon \alpha_0,
\bar k} \subset I.$ Indeed, since $|\Lambda^{\neg {\frak I}}|>2$,
we can take $\beta \in \Lambda^{\neg \frak I}_{\bar i}$, for some
$\bar i \in {\mathbb Z}_2$, such that $\beta \notin
\{\alpha_0,-\alpha_0\}$ and, by Equation (\ref{ootraa}),
satisfying $0 \neq {\frak L}_{\alpha,{\bar j}} \subset I$. Hence
we can argue as above  with the root-multiplicativity and maximal
length of ${\frak L}$ from $\beta$ instead of $\alpha_0$, to get
that in case some $\epsilon \alpha_0 \in \Lambda^{\neg \frak
I}_{\bar k}$ for some $\epsilon \in \{1,-1\}$ and ${\bar k} \in
{\mathbb Z}_2$, then
\begin{equation}\label{lara}
0 \neq {\frak L}_{\epsilon \alpha_0, \bar k}\subset I.
\end{equation}
Since $H = \sum\limits_{\beta \in \Lambda} [{\frak L}_{\beta},
{\frak L}_{-\beta}]$, Remark \ref{re3.1} and equations
(\ref{ootraa}), (\ref{lara})  give us
\begin{equation}\label{lah}
H \subset I.
\end{equation}
Now, for any $\Upsilon \in \{{\frak I}, \neg{\frak I}\}$ and
${\bar k} \in {\mathbb Z}_2$, given  any $\gamma \in
\Lambda^{\Upsilon}_{\bar k}$ since $\gamma \neq 0$ we have ${\frak
L}_{\gamma,{\bar k}}=[ {\frak L}_{\gamma, {\bar k}},H_{\bar 0}]$
and then Equation (\ref{lah}) shows ${\frak L}_{\gamma, {\bar k}}
\subset I$. The decomposition of ${\frak L}$ in Equation
(\ref{Llargo}) finally gives us $I={\frak L}$.
\end{proof}

Let us introduce an interesting notion related to a split Leibniz
superalgebra of maximal length ${\frak L}$.
We wish to distinguish  those  elements of ${\frak L}$ which
annihilate the ``Lie type roots''  of ${\frak I}$, so we have the
following definition.

\begin{definition}\label{centerlie}\rm
The {\it Lie-annihilator} of a split Leibniz superalgebra of
maximal length ${\frak L}$ is the set
$$\hbox{${\mathcal Z}_{Lie}({\frak
L}) := \{v \in {\frak L} : [v, {\frak L}_{\alpha}] + [{\frak
L}_{\alpha},v]=0$ for any $\alpha \notin \Lambda^{\frak I}\}$}.$$


\end{definition}

\smallskip

Observe that ${\mathcal Z}({\frak L}) \subset {\mathcal
Z}_{Lie}({\frak L}).$

\begin{proposition}\label{propoI}
Suppose $H = \sum\limits_{\beta \in \Lambda} [{\frak L}_{\beta},
{\frak L}_{-\beta}]$, ${\mathcal Z}_{Lie}({\frak L})=0$ and
${\frak L}$ is root-multiplicative.
 If ${\Lambda}^{\frak I}$ has all of its elements ${\neg {\frak
I}}$-connected and $|\Lambda^{{\frak I}}|>2$, then any nonzero
ideal $I$ of ${\frak L}$ such that $I \subset {\frak I}$ satisfies
$I = {\frak I}$.
\end{proposition}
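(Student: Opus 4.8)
The plan is to adapt the proof of Proposition~\ref{nueva12} to ideals contained in ${\frak I}$, using part~2 of root-multiplicativity (Definition~\ref{defmulti}) in place of part~1, and $\neg{\frak I}$-connectivity inside $\Lambda^{{\frak I}}$ instead of inside $\Lambda^{\neg{\frak I}}$. First I would record the auxiliary fact that ${\mathcal Z}_{Lie}({\frak L})=0$ forces ${\frak I}\cap H=0$: if $h\in{\frak I}\cap H$ and $\gamma\in\Lambda$ with $\gamma\notin\Lambda^{{\frak I}}$, then $[{\frak L}_{\gamma},h]\subset[{\frak L},{\frak I}]=0$ by Equation~(\ref{equi}), while $[h,{\frak L}_{\gamma}]\subset[{\frak I},{\frak L}]\subset{\frak I}$ (because ${\frak I}$ is an ideal) and also $[h,{\frak L}_{\gamma}]\subset[H,{\frak L}_{\gamma}]\subset{\frak L}_{\gamma}$ by Equation~(\ref{contencion}); hence $[h,{\frak L}_{\gamma}]\subset{\frak I}\cap{\frak L}_{\gamma}$, which is $0$ since maximal length and $\gamma\notin\Lambda^{{\frak I}}$ give ${\frak I}_{{\bar k}}\cap{\frak L}_{\gamma,{\bar k}}=0$ for each ${\bar k}\in{\mathbb Z}_2$. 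Thus $h\in{\mathcal Z}_{Lie}({\frak L})=0$. In particular $I\cap H\subset{\frak I}\cap H=0$, so for the nonzero graded ideal $I\subset{\frak I}$, Lemma~\ref{lema5} (Equation~(\ref{max})) produces a root $\alpha_0\in\Lambda_{{\bar i}_0}^{{\frak I}}$ with $0\neq{\frak L}_{\alpha_0,{\bar i}_0}\subset I$.

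The core step would be to spread this seed along $\neg{\frak I}$-connections. Fix $\alpha\in\Lambda_{{\bar j}}^{{\frak I}}\setminus\{\alpha_0,-\alpha_0\}$. By hypothesis there is a $\neg{\frak I}$-connection $\{\delta_1,\dots,\delta_n\}$ from $\alpha_0$ to $\alpha$ as in Definition~\ref{def_cone_S}: $\delta_1=\alpha_0\in\Lambda_{{\bar i}_0}^{{\frak I}}$, each $\delta_k\in\Lambda_{{\bar i}_k}^{\neg{\frak I}}$ for $k\geq2$, each partial sum $\delta_1+\cdots+\delta_m$ lies in $\Lambda_{\varepsilon_m}^{{\frak I}}$ with $\varepsilon_m:={\bar i}_0+{\bar i}_2+\cdots+{\bar i}_m$, and the total sum equals $\alpha$ in degree ${\bar j}$. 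I would induct on $m$: assuming $0\neq{\frak L}_{\delta_1+\cdots+\delta_m,\,\varepsilon_m}\subset I$, the facts that $\delta_1+\cdots+\delta_m\in\Lambda^{{\frak I}}$, $\delta_{m+1}\in\Lambda^{\neg{\frak I}}$ and $\delta_1+\cdots+\delta_{m+1}\in\Lambda^{{\frak I}}$ let part~2 of root-multiplicativity together with maximal length give
$$0\neq[{\frak L}_{\delta_1+\cdots+\delta_m,\,\varepsilon_m},\,{\frak L}_{\delta_{m+1},{\bar i}_{m+1}}]={\frak L}_{\delta_1+\cdots+\delta_{m+1},\,\varepsilon_{m+1}},$$
and, $I$ being an ideal, this space lies in $[I,{\frak L}]\subset I$. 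After $n-1$ steps ${\frak L}_{\alpha,{\bar j}}\subset I$; hence ${\frak L}_{\alpha,{\bar j}}\subset I$ for every $\alpha\in\Lambda_{{\bar j}}^{{\frak I}}\setminus\{\alpha_0,-\alpha_0\}$ and every ${\bar j}\in{\mathbb Z}_2$.

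To cover $\pm\alpha_0$ as well, I would use $|\Lambda^{{\frak I}}|>2$ to pick $\beta\in\Lambda_{{\bar\ell}}^{{\frak I}}\setminus\{\alpha_0,-\alpha_0\}$; by the previous step $0\neq{\frak L}_{\beta,{\bar\ell}}\subset I$, and rerunning the connection argument with $\beta$ as the seed (legitimate because $\Lambda^{{\frak I}}$ is $\neg{\frak I}$-connected and $\epsilon\alpha_0\notin\{\beta,-\beta\}$) yields ${\frak L}_{\epsilon\alpha_0,{\bar k}}\subset I$ for each $\epsilon\in\{1,-1\}$ and ${\bar k}\in{\mathbb Z}_2$ with $\epsilon\alpha_0\in\Lambda_{{\bar k}}^{{\frak I}}$. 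Consequently ${\frak L}_{\gamma,{\bar k}}\subset I$ for all $\gamma\in\Lambda_{{\bar k}}^{{\frak I}}$ and all ${\bar k}$, and since ${\frak I}\cap H=0$, Equation~(\ref{I}) reduces to
$${\frak I}=\Big(\bigoplus_{\alpha\in\Lambda_{{\bar 0}}^{{\frak I}}}{\frak L}_{\alpha,{\bar 0}}\Big)\oplus\Big(\bigoplus_{\beta\in\Lambda_{{\bar 1}}^{{\frak I}}}{\frak L}_{\beta,{\bar 1}}\Big)\subset I.$$
Together with $I\subset{\frak I}$ this gives $I={\frak I}$, as desired.

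The step I expect to demand most care is the induction in the second paragraph: at each stage one must check that the element fed to part~2 of root-multiplicativity is its \emph{${\frak I}$-argument} (it sits on the left and its root is the current partial sum, which Definition~\ref{def_cone_S} guarantees lies in $\Lambda^{{\frak I}}$), that the connector $\delta_{m+1}$ is its \emph{$\neg{\frak I}$-argument}, and that one-dimensionality coming from maximal length upgrades non-vanishing of the bracket to an equality of root spaces; the ${\mathbb Z}_2$-degree bookkeeping along the connection — so that the process ends exactly at ${\frak L}_{\alpha,{\bar j}}$ — must be tracked as well, just as in Proposition~\ref{nueva12}.
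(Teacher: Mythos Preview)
Your proof is correct and follows essentially the same route as the paper's: both first establish ${\frak I}\cap H=0$ via ${\mathcal Z}_{Lie}({\frak L})=0$, then find a seed ${\frak L}_{\alpha_0,{\bar i}_0}\subset I$, propagate it through $\Lambda^{{\frak I}}$ using part~2 of root-multiplicativity along $\neg{\frak I}$-connections exactly as in Proposition~\ref{nueva12}, and finally invoke $|\Lambda^{{\frak I}}|>2$ to reach $\pm\alpha_0$. Your explicit appeal to Equation~(\ref{equi}) for the $[{\frak L}_\gamma,h]$ direction is a mild streamlining of the paper's argument, but the structure is identical.
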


\begin{proof}
By Equation (\ref{max}) we can write  $$I = (I \cap H) \oplus
(\bigoplus\limits_{\beta \in {\Lambda}^{{I}}_{\bar 0}}{{\frak
L}_{\beta, \bar{0}}} )\oplus (\bigoplus\limits_{\gamma \in
{\Lambda}^{I}_{\bar 1}}{{\frak L}_{\gamma, \bar{1}}} )$$ where
$$\Lambda_{\bar{i}}^{I} := \{\delta \in \Lambda : {
I}_{\bar{i}} \cap {\frak L}_{\delta, \bar{i}} \neq 0\}=\{\delta
\in \Lambda : 0\neq {\frak L}_{\delta, \bar{i}} \subset {
I}_{\bar{i}} \}$$ and with ${\Lambda}^{{I}}_{\bar i} \subset
{\Lambda}^{{\frak I}}_{\bar i}$ for any ${\bar i} \in {\mathbb
Z}_2$.
We begin by showing that
\begin{equation}\label{equhache}
{\frak I} \cap H \subset {\mathcal Z}_{Lie}({\frak L}).
\end{equation}
Indeed, fixed some ${\bar i} \in {\mathbb Z}_2$ and for any
$\alpha \notin \Lambda^{\frak I}$ and $\bar j \in {\mathbb Z}_2$
we have $$[{\frak I} \cap H_{\bar i}, {\frak L}_{\alpha, \bar
j}]+[{\frak L}_{\alpha, \bar j}, {\frak I} \cap H_{\bar i}]
\subset {\frak L}_{\alpha, \bar i + \bar j} \subset {\frak I}.$$
Hence, in case $[{\frak I} \cap H_{\bar i}, {\frak L}_{\alpha,
\bar j}]+[{\frak L}_{\alpha, \bar j}, {\frak I} \cap H_{\bar i}]
\neq 0$ we have  $\alpha \in \Lambda^{\frak I}$, a contradiction.
Hence $[{\frak I} \cap H_{\bar i}, {\frak L}_{\alpha, \bar
j}]+[{\frak L}_{\alpha, \bar j}, {\frak I} \cap H_{\bar i}] = 0$
for each ${\bar i}, {\bar j} \in {\mathbb Z}_2$ and so ${\frak I}
\cap H \subset {\mathcal Z}_{Lie}({\frak L}).$

From the above ${\frak I} \cap H \subset {\mathcal Z}_{Lie}({\frak
L})=0$ and  we can write
\begin{equation}\label{afinal}
{\frak I}=(\bigoplus\limits_{\alpha \in {\Lambda}^{{{\frak
I}}}_{\bar 0}}{{\frak L}_{\alpha, \bar{0}}} )\oplus
(\bigoplus\limits_{\delta \in {\Lambda}^{{\frak I}}_{\bar
1}}{{\frak L}_{\delta, \bar{1}}} ).
\end{equation}

Taking into account ${I}\cap H \subset {\frak I}\cap H=0$, we also
can write
$$I=(\bigoplus\limits_{\beta \in {\Lambda}^{{I}}_{\bar 0}}{{\frak
L}_{\beta, \bar{0}}} )\oplus (\bigoplus\limits_{\gamma \in
{\Lambda}^{I}_{\bar 1}}{{\frak L}_{\gamma, \bar{1}}} )$$ with any
${\Lambda}^{I}_{\bar i} \subset {\Lambda}^{{\frak I}}_{\bar i}$.
Hence, we can take some $\beta_0 \in {\Lambda}^{I}_{\bar i}$ such
that
\begin{equation}\label{betacero}
0 \neq {\frak L}_{\beta_0,{\bar i}} \subset I.
\end{equation}
Now, we can argue with the root-multiplicativity and the maximal
length of ${\frak L}$ as in Proposition \ref{nueva12} to conclude
that given any $\beta \in {\Lambda}^{\frak I}_{\bar j}\setminus
\{\beta_0,-\beta_0\}$, $\bar j \in {\mathbb Z}_2$, there exists a
${\neg {\frak I}}$-connection
\begin{equation}\label{cone}
\{\delta_1,\delta_2,...,\delta_{r-1},\delta_r\}
\end{equation}
from $\beta_0$ to $\beta$ such that
\begin{equation}\label{1003}
0 \neq [[...[{\frak L}_{\beta_0,\bar{i}}, {\frak
L}_{\delta_2,\bar{i}_2}],...],{\frak L}_{\delta_r,\bar{i}_r}] =
{\frak L}_{\beta,\bar{j}} \subset I.
\end{equation}
Now we have to study whether in case $\epsilon \beta_0 \in
\Lambda^{\frak I}_{\bar k}$ for some $\epsilon \in \{1,-1\}$ and
$\bar k \in {\mathbb Z}_2$ then ${\frak L}_{\epsilon \beta_0,
{\bar k}} \subset I$. To do that, observe that the fact
$|\Lambda^{{\frak I}}|>2$ allows us to take an element $\beta \in
\Lambda^{\frak I}_{\bar i}$, for some $\bar i \in {\mathbb Z}_2$,
such that $\beta \notin \{\beta_0,-\beta_0\}$ and, by Equation
(\ref{1003}), satisfying $0 \neq {\frak L}_{\beta,{\bar j}}
\subset I$. Hence we can argue as above with the
root-multiplicativity and maximal length of ${\frak L}$ from
$\beta$ instead of $\beta_0$, to get that in case  $\epsilon
\beta_0 \in \Lambda^{\frak I}_{\bar k}$ for some $\epsilon \in
\{1,-1\}$ and ${\bar k} \in {\mathbb Z}_2$, then $0\neq {\frak
L}_{\epsilon \beta_0, \bar k}\subset I.$

The decomposition of ${\frak I}$ in Equation (\ref{afinal})
finally gives us $I={\frak I}$.
\end{proof}




\begin{theorem}\label{last}
Suppose $$H = \sum\limits_{\beta \in \Lambda} [{\frak L}_{\beta},
{\frak L}_{-\beta}], \hspace{0.1cm} {\mathcal Z_{Lie}}(\frak L)
=0,$$  ${\frak L}$ is root-multiplicative and
$|\Lambda^{\neg{\frak I}}|>2, |\Lambda^{{\frak I}}| >2$. Then
${\frak L}$ is simple if and only if $\Lambda^{\frak I}$ and
$\Lambda^{\neg {\frak I}}$ have (respectively) all of their
elements ${\neg {\frak I}}$-connected.
\end{theorem}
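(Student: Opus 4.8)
The plan is to prove the two implications separately, using the machinery already developed. For the ``if'' direction, assume $\Lambda^{\frak I}$ and $\Lambda^{\neg{\frak I}}$ each have all of their roots ${\neg{\frak I}}$-connected, and let $I$ be a nonzero ideal of ${\frak L}$ with $[{\frak L},{\frak L}] \neq 0$-type nontriviality; I must show $I \in \{{\frak I}, {\frak L}\}$. First I would split into two cases according to whether $I \subset H + {\frak I}$ or not. If $I \nsubseteq H + {\frak I}$, then since $\Lambda^{\neg{\frak I}}$ has all of its roots ${\neg{\frak I}}$-connected and $|\Lambda^{\neg{\frak I}}| > 2$, Proposition \ref{nueva12} applies directly to give $I = {\frak L}$. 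So the work is concentrated in the case $I \subset H + {\frak I}$, where I want to conclude $I = {\frak I}$ (after first ruling out $I \subset H$). Here I would use Lemma \ref{lema5} to write $I = (I\cap H) \oplus (\bigoplus_{\alpha} (I\cap{\frak L}_\alpha))$, observe via Equation (\ref{equi}) and the hypothesis $H = \sum_\beta[{\frak L}_\beta,{\frak L}_{-\beta}]$ that a purely-$H$ ideal would be central-like, and then reduce to the situation $0 \neq I \subset {\frak I}$, at which point Proposition \ref{propoI} (using $\Lambda^{\frak I}$ ${\neg{\frak I}}$-connected, $|\Lambda^{\frak I}|>2$, ${\mathcal Z}_{Lie}({\frak L})=0$) yields $I = {\frak I}$. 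Finally, Definition \ref{Defsimple} requires $[{\frak L},{\frak L}] \neq 0$; this follows because $H = \sum_\beta [{\frak L}_\beta,{\frak L}_{-\beta}] \subset [{\frak L},{\frak L}]$ and $H \neq 0$ (as $\Lambda \neq \emptyset$ forces $H_{\bar 0}\neq 0$, else all roots vanish), so ${\frak L}$ is simple.

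For the ``only if'' direction, assume ${\frak L}$ is simple. I would fix $\alpha, \beta \in \Lambda^{\neg{\frak I}}$ (of degrees $\bar i, \bar j$ respectively) and show $\alpha \sim_{\neg{\frak I}} \beta$, then do the analogous thing inside $\Lambda^{\frak I}$. The idea is the same one used in the proof of Theorem \ref{teo1}-2: I would build an ideal out of the ${\neg{\frak I}}$-connection component of $\alpha$ and invoke simplicity. Concretely, I would let $[\alpha]_{\neg{\frak I}}$ denote the set of roots ${\neg{\frak I}}$-connected to $\alpha$ and assemble the graded subspace $I^{\neg{\frak I}}_{[\alpha]}$ spanned by the corresponding root spaces together with the relevant brackets $[{\frak L}_\gamma,{\frak L}_{-\gamma}]$ landing in $H$; using root-multiplicativity, maximal length, and the same connection-closure arguments as in Propositions \ref{pro2}, \ref{nuevapro}, \ref{nueva12}, this should be an ideal of ${\frak L}$. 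By simplicity it must be $\{0\}$, ${\frak I}$ or ${\frak L}$; since it contains ${\frak L}_{\alpha,\bar i} \neq 0$ with $\alpha \in \Lambda^{\neg{\frak I}}$, it contains a root space not inside ${\frak I}$, so it is not $\{0\}$ or ${\frak I}$, hence it is ${\frak L}$. Reading off the decomposition (\ref{Llargo}), every root in $\Lambda^{\neg{\frak I}}$ lies in $[\alpha]_{\neg{\frak I}}$, giving all roots of $\Lambda^{\neg{\frak I}}$ ${\neg{\frak I}}$-connected. For $\Lambda^{\frak I}$: take $\alpha \in \Lambda^{\frak I}$, build the analogous ideal $I \subset {\frak I}$ (now the ${\neg{\frak I}}$-connections from $\alpha$ stay inside $\Lambda^{\frak I}$ by definition), which is nonzero and contained in ${\frak I}$, so by simplicity $I = {\frak I}$, forcing all of $\Lambda^{\frak I}$ to be ${\neg{\frak I}}$-connected to $\alpha$; and the cross-degree part of ``all roots connected'' follows since the connection notion already allows the target degree $\bar j$ to differ from $\bar i$.

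The main obstacle I anticipate is the ``only if'' direction, specifically verifying carefully that the candidate subspaces $I^{\neg{\frak I}}_{[\alpha]}$ and its ${\frak I}$-analogue are genuinely ideals: one must check closure of $[\cdot,\cdot]$ against all of ${\frak L} = H \oplus \bigoplus_\gamma {\frak L}_\gamma$, handle the $H$-component brackets via the Super Leibniz identity exactly as in the proof of Proposition \ref{nuevapro} (where a bracket into $H$ forces a bracket between root spaces), and use root-multiplicativity to ensure that whenever a sum of roots is again a root the bracket is nonzero, so that the ${\neg{\frak I}}$-connection bookkeeping of degrees in Definition \ref{def_cone_S} is faithfully reflected. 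A secondary subtlety is the bootstrapping step recovering ${\frak L}_{\pm\alpha_0}$ (or ${\frak L}_{\pm\beta_0}$) inside the ideal, which is precisely where the hypotheses $|\Lambda^{\neg{\frak I}}| > 2$ and $|\Lambda^{\frak I}| > 2$ are needed — one picks an auxiliary root distinct from $\pm\alpha_0$ and re-runs the connection argument from there, just as in Propositions \ref{nueva12} and \ref{propoI}. Once these ideals are in hand, the conclusion is immediate from Definition \ref{Defsimple} and the decomposition (\ref{Llargo}).
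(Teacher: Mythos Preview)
Your converse (``if'') direction is essentially the paper's proof: split on whether $I \subset H + {\frak I}$, invoke Proposition~\ref{nueva12} when $I \nsubseteq H + {\frak I}$, and otherwise show $I \cap H \subset {\mathcal Z}_{Lie}({\frak L}) = 0$ so that Lemma~\ref{lema5} gives $I \subset {\frak I}$ and Proposition~\ref{propoI} finishes.

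Your forward (``only if'') direction takes a different route and has a genuine gap. You propose to build an ideal $I^{\neg{\frak I}}_{[\alpha]}$ from the ${\neg{\frak I}}$-connection class of $\alpha$, spanned by the root spaces ${\frak L}_{\beta,\bar j}$ for $\beta \in [\alpha]_{\neg{\frak I}}$ together with the $H$-brackets. But this subspace is not closed under bracketing with ${\frak L}$: for $\beta \in [\alpha]_{\neg{\frak I}} \cap \Lambda^{\neg{\frak I}}_{\bar j}$ and arbitrary $\gamma \in \Lambda^{\neg{\frak I}}_{\bar k}$, the product $[{\frak L}_{\beta,\bar j}, {\frak L}_{\gamma,\bar k}] = {\frak L}_{\beta+\gamma,\bar j+\bar k}$ may satisfy $\beta+\gamma \in \Lambda^{\frak I}_{\bar j+\bar k}$, landing in ${\frak I}$ and hence outside your subspace. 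You cannot borrow the arguments of Propositions~\ref{pro2} and~\ref{nuevapro}: those concern the weak connection of Definition~\ref{con}, where connecting roots range over $\pm\Lambda$ and the relation is an equivalence. The ${\neg{\frak I}}$-connection of Definition~\ref{def_cone_S} restricts connecting roots to $\Lambda^{\neg{\frak I}}$ (not $\pm\Lambda^{\neg{\frak I}}$) and requires the final sum to equal $\beta$ (not $\pm\beta$); since $\Lambda$ is not assumed symmetric, $-\alpha_k$ need not lie in $\Lambda^{\neg{\frak I}}$, so $\sim_{\neg{\frak I}}$ is not symmetric and ``connection component'' is not well-defined. Adjoining all of ${\frak I}$ partially repairs closure but then checking closure against $H_{\bar 1}$ and the read-off step need separate arguments you have not supplied.

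The paper avoids all of this by reversing the construction: it takes the ideal $I({\frak L}_{\alpha,\bar i})$ \emph{generated by} the single root space ${\frak L}_{\alpha,\bar i}$, which is automatically an ideal, and uses simplicity to get $I({\frak L}_{\alpha,\bar i}) = {\frak L}$. Then Remark~\ref{re3.1} and the Super Leibniz identity express every element as a linear combination of nested brackets $[[\cdots[v_\alpha,v_{\beta_1}],\ldots],v_{\beta_n}]$ (and three variants). Given any $\alpha' \in \Lambda^{\neg{\frak I}}_{\bar j}$, some such bracket lands nontrivially in ${\frak L}_{\alpha',\bar j}$; along that chain every $\beta_i$ and every partial sum must lie in $\Lambda^{\neg{\frak I}}$, since a factor in ${\frak I}$ kills the bracket via $[{\frak L},{\frak I}]=0$ and a partial sum in $\Lambda^{\frak I}$ forces the final result into ${\frak I}$. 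The chain $\{\alpha,\beta_1,\ldots,\beta_n\}$ is then the desired ${\neg{\frak I}}$-connection. The same device from $\beta \in \Lambda^{\frak I}_{\bar i}$ handles $\Lambda^{\frak I}$.
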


\begin{proof}
Fix some  $\alpha \in \Lambda^{\neg {\frak I}}_{\bar i}$ with
${\bar i} \in {\mathbb Z}_2$, and denote by  $I({\frak
L}_{\alpha,{\bar i}})$ the (graded) ideal of ${\frak L}$ generated
by ${\frak L}_{\alpha,{\bar i}}$. By simplicity $I({\frak
L}_{\alpha,{\bar i}}) = {\frak L}.$ Observe that Remark
\ref{re3.1} together with Super Leibniz identity allow us to
assert that $I({\frak L}_{\alpha,{\bar i}})$ is contained in the
linear span of the set
$$\{[[\cdots[v_{\alpha}, v_{\beta_1}],...],v_{\beta_n}]; \hspace{0.1cm}
[v_{\beta_n},[...[v_{\beta_1},v_{\alpha}],...]];$$
$$\hbox{ $[[...[v_{\beta_1}, v_{\alpha}],...],v_{\beta_n}]; \hspace{0.1cm}
[v_{\beta_n},[...[v_{\alpha}, v_{\beta_1}],...]]$ with $0 \neq
v_{\alpha} \in {\frak L}_{\alpha,{\bar i}}$,}
$$
$$\hbox{ $0 \neq v_{\beta_i} \in {\frak L}_{\beta_i,{\bar j_i}}, \beta_i \in \Lambda$, ${\bar j_i} \in {\mathbb Z}_2$ and $n \in {\hu N}$ \}}$$
being the partial sums $\alpha+\beta_1,
\alpha+\beta_1+\beta_2,..., \alpha+\beta_1+\beta_2+
\cdots+\beta_n$ nonzero. From here, given any $\alpha'\in
\Lambda^{\neg {\frak I}}_{\bar j}$, with ${\bar j}\in {\mathbb
Z}_2$,  the above observation gives us that we can write $\alpha'
= \alpha + \beta_1 + \cdots+ \beta_n$ with any $\beta_i \in
\Lambda^{\neg \frak I}_{\bar j_i}$ and being the partial sums
$\alpha + \beta_1 + \cdots+ \beta_k \in \Lambda^{\neg \frak
I}_{\bar i + \bar j_1 +\cdots + \bar j_k},$ (observe that in case
some $\beta_i \in \Lambda^{\frak I}_{\bar j_i}$ or some partial
sum a root belonging to $\Lambda^{\frak I}_{\bar n}$, then either
the product involving $v_{\beta_i}$ or the partial sum is zero, or
implies $\alpha' \in \Lambda^{\frak I}_{\bar k}$. Hence any
$\beta_i \in \Lambda^{\neg{\frak I}}_{\bar j_i}$ and the partial
sums are in $\Lambda^{\neg{\frak I}}_{\bar n}$). From here, we
have that $\{\alpha, \beta_1,..., \beta_n\}$ is a ${\neg{\frak
I}}$-connection from $\alpha$ to $\alpha'$ and we can assert that
$\hbox{$\Lambda^{\neg{\frak I}}$ has all of its elements ${\neg
{\frak I}}$-connected.}$

If $\Lambda^{\frak I} \neq \emptyset$ and we take some $\beta \in
\Lambda^{\frak I}_{\bar i}$ with ${\bar i} \in {\mathbb Z}_2$, a
similar above argument gives us ${\Lambda}^{\frak I}$ has all of
its elements ${\neg {\frak I}}$-connected.

\smallskip

Let us see the converse. Consider $I$ a nonzero  ideal of ${\frak
L}$ and let us show that necessarily either $I = {\frak I}$ or $I
= {\frak L}$. Let us distinguish two possibilities:
\begin{itemize}
\item If $I \nsubseteq H + {\frak I}$, Proposition \ref{nueva12}
gives us  $I = {\frak L}$.

\item If $I \subset H + {\frak I}$, observe that $I \cap H \subset
{\mathcal Z_{Lie}}(\frak L)$. Indeed, for any $\alpha \notin
\Lambda^{\frak I}$ and $\bar i, \bar j \in {\mathbb Z}_2$ we have
 $[I \cap H_{\bar i}, {\frak L}_{\alpha,\bar j}] +
[{\frak L}_{\alpha,\bar j}, I \cap H_{\bar i}] \subset I \cap
{\frak L}_{\alpha,\bar i + \bar j}  \subset {\frak I}$ and so $[I
\cap H_{\bar i}, {\frak L}_{\alpha,\bar j}] + [{\frak
L}_{\alpha,\bar j}, I \cap H_{\bar i}]=0$. From here
$$I \cap H \subset {\mathcal Z_{Lie}}(\frak L)=0 .$$
Hence  Lemma \ref{lema5} gives us    $I \subset {\frak I}$.
Finally  Proposition \ref{propoI} implies $I = {\frak I}$ and the
proof is complete.
\end{itemize}
\end{proof}

It remains to study the cases in which either $|\Lambda^{\neg
{\frak I}}| \leq 2 $ or $|\Lambda^{{\frak I}}| \leq 2.$

\smallskip

\begin{proposition}\label{cardinal2}
Suppose    $$H = \sum\limits_{\beta \in \Lambda} [{\frak
L}_{\beta}, {\frak L}_{-\beta}], \hspace{0.1cm} {\mathcal
Z_{Lie}}(\frak L) =0,$$  ${\frak L}$ is root-multiplicative and
$\Lambda^{\neg {\frak I}}$, $\Lambda^{\frak I}$  have
(respectively) all of their elements ${\neg {\frak I}}$-connected.
If either $|\Lambda^{\neg{\frak I}}| \leq 2$ or $|\Lambda^{{\frak
I}}| \leq 2$, then one of the following assertions holds.
\begin{enumerate}
\item[{\rm (1)}] ${\frak L}$ is a simple split  Leibniz
superalgebra.

\item[{\rm (2)}]
${\rm char}({\mathbb K}) \neq 2$ and ${\frak L} = H \oplus
(\bigoplus\limits_{\alpha \in \Lambda_{\bar{0}}^{\neg {\frak I}}}
{\frak L}_{\alpha, \bar{0}}) \oplus (\bigoplus\limits_{\beta \in
\Lambda_{\bar{1}}^{\neg {\frak I}}} {\frak L}_{\beta, \bar{1}})
\oplus I\oplus K$ with ${\frak I}=I \oplus K$ and where either
\begin{itemize}
\item[{\rm (i)}]  $I={\frak L}_{\gamma, \bar{i}}\oplus {\frak
L}_{-\gamma, \bar{j}}$ is a two-dimensional ideal of ${\frak L}$
and $K={\frak L}_{\gamma, \bar{i} + \bar{1}} \oplus {\frak
L}_{-\gamma, \bar{j}+ \bar{1}}$ is a one or two-dimensional
(abelian) subalgebra of ${\frak L}$ (depending on either ${\frak
L}_{\gamma, \bar{i}+ \bar{1}}=0$ or ${\frak L}_{-\gamma, \bar{j}+
\bar{1}}=0$), for ${\bar i}, {\bar j} \in {\mathbb Z}_2$; or

\item[{\rm (ii)}] $I={\frak L}_{\gamma, \bar{0}}\oplus {\frak
L}_{\gamma, \bar{1}}\oplus {\frak L}_{-\gamma, \bar{i}}$ is a
three-dimensional ideal of ${\frak L}$ and $K={\frak L}_{-\gamma,
\bar{i}+ \bar{1}}$ a one-dimensional (abelian) subalgebra of
${\frak L}$, for  $ {\bar i} \in {\mathbb Z}_2$.

\end{itemize}

\item[{\rm (3)}] ${\rm char}({\mathbb K}) = 2$ and ${\frak L} = I
\oplus K \oplus (\bigoplus\limits_{\gamma \in
\Lambda_{\bar{0}}^{{\frak I}}\setminus \Lambda_{\bar{0}}^{I}}
{\frak L}_{\gamma, \bar{0}}) \oplus (\bigoplus\limits_{\delta \in
\Lambda_{\bar{1}}^{ {\frak I}}\setminus \Lambda_{\bar{1}}^{I}}
{\frak L}_{\delta, \bar{1}})$ where $I= ([{\frak L}_{\alpha,
\bar{i}}, {\frak L}_{\alpha, \bar{i}+\bar{1}}]+[{\frak
L}_{\alpha,\bar{i}+ \bar{1}},{\frak L}_{\alpha, \bar{i}}]) \oplus
{\frak L}_{\alpha, \bar{i}}\oplus (\bigoplus\limits_{\tau \in
\Lambda_{\bar{0}}^{{\frak I}}\cap \Lambda_{\bar{0}}^I} {\frak
L}_{\tau, \bar{0}}) \oplus (\bigoplus\limits_{\eta \in
\Lambda_{\bar{1}}^{ {\frak I}}\cap \Lambda_{\bar{1}}^I} {\frak
L}_{\eta, \bar{1}})$ is a nonzero ideal of
${\frak L}$, $I \neq {\frak I}$; and $K=[{\frak
L}_{\alpha,\bar{i}+ \bar{1}}, {\frak L}_{\alpha, \bar{i}+\bar{1}}]
\oplus {\frak L}_{\alpha, \bar{i}+\bar{1}}$ is a two-dimensional
subalgebra of ${\frak L}$, for ${\bar i} \in {\mathbb Z}_2$.

\item[{\rm (4)}] ${\rm char}({\mathbb K}) \neq 2$ and ${\frak L} =
I \oplus K $ where either
\begin{itemize}
  \item[{\rm (i)}] $I= H_{\bar 1} \oplus {\frak L}_{\alpha, \bar{i}} \oplus  (\bigoplus\limits_{\tau \in
\Lambda_{\bar{0}}^{{\frak I}}\cap \Lambda_{\bar{0}}^{I}} {\frak
L}_{\tau, \bar{0}}) \oplus (\bigoplus\limits_{\eta \in
\Lambda_{\bar{1}}^{ {\frak I}}\cap \Lambda_{\bar{1}}^{I}} {\frak
L}_{\eta, \bar{1}})$ is an  ideal
 of ${\frak L}$
and $K=H_{\bar 0} \oplus {\frak L}_{\alpha,\bar{i}+ \bar{1}}
\oplus {\frak L}_{-\alpha,\bar{i}+ \bar{1}}\oplus
(\bigoplus\limits_{\gamma \in \Lambda_{\bar{0}}^{{\frak
I}}\setminus \Lambda_{\bar{0}}^{I}} {\frak L}_{\gamma, \bar{0}})
\oplus (\bigoplus\limits_{\delta \in \Lambda_{\bar{1}}^{ {\frak
I}}\setminus \Lambda_{\bar{1}}^{I}} {\frak L}_{\delta, \bar{1}})$
with ${\rm dim}({\frak L}_{\alpha, \bar{i}} \oplus  {\frak
L}_{\alpha,\bar{i}+ \bar{1}} \oplus {\frak L}_{-\alpha,\bar{i}+
\bar{1}})=3$,
 for ${\bar i} \in {\mathbb Z}_2$, or

\item[{\rm (ii)}] $I= (I\cap H_{\bar 0}) \oplus H_{\bar 1} \oplus
{\frak L}_{\alpha, \bar{i}} \oplus {\frak L}_{-\alpha, \bar{i}}
\oplus (\bigoplus\limits_{\tau \in \Lambda_{\bar{0}}^{{\frak
I}}\cap \Lambda_{\bar{0}}^{I}} {\frak L}_{\tau, \bar{0}}) \oplus
(\bigoplus\limits_{\eta \in \Lambda_{\bar{1}}^{ {\frak I}}\cap
\Lambda_{\bar{1}}^{I}} {\frak L}_{\eta, \bar{1}})$ is an ideal
 of ${\frak L}$
and $$K=K_{\bar 0} \oplus {\frak L}_{\alpha,\bar{i}+ \bar{1}}
\oplus {\frak L}_{-\alpha,\bar{i}+ \bar{1}} \oplus
(\bigoplus\limits_{\gamma \in \Lambda_{\bar{0}}^{{\frak
I}}\setminus \Lambda_{\bar{0}}^{I}} {\frak L}_{\gamma, \bar{0}})
\oplus (\bigoplus\limits_{\delta \in \Lambda_{\bar{1}}^{ {\frak
I}}\setminus \Lambda_{\bar{1}}^{I}} {\frak L}_{\delta, \bar{1}}),
$$ where $$K_{\bar 0}=\sum\limits_{\tiny{[{\frak L}_{\epsilon
\alpha, \bar{i} + \bar{1}},{\frak L}_{-\epsilon \alpha, \bar{i} +
\bar{1}}] \cap I=0; \epsilon \in \{1,-1\}}} [{\frak L}_{\epsilon
\alpha, \bar{1} + \bar{1}},{\frak L}_{-\epsilon \alpha, \bar{1} +
\bar{1}}]$$ and ${\rm dim}({\frak L}_{\alpha, \bar{i}} \oplus
{\frak L}_{-\alpha, \bar{i}} \oplus {\frak L}_{\alpha,\bar{i}+
\bar{1}} \oplus {\frak L}_{-\alpha,\bar{i}+ \bar{1}})=4$,  for
${\bar i} \in {\mathbb Z}_2$, or

\item[{\rm (iii)}] $I= (I\cap H_{\bar 0}) \oplus (I \cap H_{\bar
1} ) \oplus {\frak L}_{\alpha, \bar{i}}  \oplus
(\bigoplus\limits_{\tau \in \Lambda_{\bar{0}}^{{\frak I}}\cap
\Lambda_{\bar{0}}^{I}} {\frak L}_{\tau, \bar{0}}) \oplus
(\bigoplus\limits_{\eta \in \Lambda_{\bar{1}}^{ {\frak I}}\cap
\Lambda_{\bar{1}}^{I}} {\frak L}_{\eta, \bar{1}})$ is an ideal
 of ${\frak L}$
and $$K=K_{\bar 0} \oplus K_{\bar 1} \oplus   {\frak
L}_{-\alpha,\bar{i}} \oplus {\frak L}_{\alpha,\bar{i}+ \bar{1}}
\oplus {\frak L}_{-\alpha,\bar{i}+ \bar{1}} \oplus
(\bigoplus\limits_{\gamma \in \Lambda_{\bar{0}}^{{\frak
I}}\setminus \Lambda_{\bar{0}}^I} {\frak L}_{\gamma, \bar{0}})
\oplus (\bigoplus\limits_{\delta \in \Lambda_{\bar{1}}^{ {\frak
I}}\setminus \Lambda_{\bar{1}}^I} {\frak L}_{\delta, \bar{1}})$$
where  any $K_{\bar j}=\sum\limits_{\tiny{[{\frak L}_{\epsilon
\alpha, \bar{k}},{\frak L}_{-\epsilon \alpha, \bar{k}+\bar{j}}]
\cap I=0; \epsilon \in \{1,-1\}}; {\bar k} \in {\mathbb Z}_2}
[{\frak L}_{\epsilon \alpha, \bar{k} },{\frak L}_{-\epsilon
\alpha, \bar{k}+\bar{j} }]$  and ${\rm dim}({\frak L}_{\alpha,
\bar{i}} \oplus {\frak L}_{-\alpha, \bar{i}} \oplus {\frak
L}_{\alpha,\bar{i}+ \bar{1}} \oplus {\frak L}_{-\alpha,\bar{i}+
\bar{1}})=4$, for ${\bar i} \in {\mathbb Z}_2$.
 \end{itemize}
\end{enumerate}
\end{proposition}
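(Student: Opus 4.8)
The plan is a contrapositive-style case analysis: we assume ${\frak L}$ is not simple, fix a nonzero graded ideal $I$ with $I \neq {\frak I}$ and $I \neq {\frak L}$, and show that the small-cardinality hypothesis forces ${\frak L}$, together with $I$ and a suitable complement $K$, into exactly one of the listed shapes. The first reduction is the one already used in the proof of Theorem \ref{last}: since $I$ is graded, Lemma \ref{lema5} and the decomposition (\ref{Llargo}) express $I$ as $(I\cap H)$ plus a sum of root spaces ${\frak L}_{\gamma,\bar k}$; and if $I \subset H + {\frak I}$ then, by the argument giving (\ref{equhache}), $I \cap H \subset {\mathcal Z}_{Lie}({\frak L}) = 0$, whence $I \subset {\frak I}$. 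Thus we land in one of two branches: \textbf{(A)} $I$ contains some ${\frak L}_{\alpha_0,\bar i_0}$ with $\alpha_0 \in \Lambda^{\neg{\frak I}}$; or \textbf{(B)} $0 \neq I \subsetneq {\frak I}$. Moreover branch (A) can occur only when $|\Lambda^{\neg{\frak I}}| \le 2$ (otherwise Proposition \ref{nueva12} gives $I = {\frak L}$), and branch (B) only when $|\Lambda^{\frak I}| \le 2$ (otherwise Proposition \ref{propoI} gives $I = {\frak I}$); in particular at least one of the two small-cardinality situations is in force, as assumed.

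In branch (A) one runs the root-multiplicativity and maximal-length propagation from the proof of Proposition \ref{nueva12}, starting from ${\frak L}_{\alpha_0,\bar i_0} \subset I$: using that $\Lambda^{\neg{\frak I}}$ is entirely $\neg{\frak I}$-connected, every ${\frak L}_{\beta,\bar j}$ with $\beta \in \Lambda^{\neg{\frak I}}\setminus\{\alpha_0,-\alpha_0\}$ is pulled into $I$, and since $|\Lambda^{\neg{\frak I}}|\le 2$ there are no such $\beta$. Hence the whole problem becomes the finite bookkeeping of deciding, for $\epsilon\in\{1,-1\}$ and $\bar k\in{\mathbb Z}_2$, whether ${\frak L}_{\epsilon\alpha_0,\bar k}\subset I$, and then, via $H = \sum_{\beta}[{\frak L}_\beta,{\frak L}_{-\beta}]$ and Remark \ref{re3.1}, which summands $[{\frak L}_{\epsilon\alpha_0,\bar k},{\frak L}_{-\epsilon\alpha_0,\bar l}]$ of $H$ are thereby forced into $I$; maximal length makes each such space at most one-dimensional. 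Since $[{\frak I},{\frak L}] = 0$ by (\ref{equi}) while $\gamma \neq 0$ gives ${\frak L}_{\gamma,\bar k} = [{\frak L}_{\gamma,\bar k},H_{\bar 0}]$ for $\gamma \in \Lambda^{\frak I}$, once enough of $H$ lies in $I$ it absorbs the corresponding $\Lambda^{\frak I}$-root spaces. Organising this according to which of ${\frak L}_{\pm\alpha_0,\bar i},{\frak L}_{\pm\alpha_0,\bar i+\bar 1}$ are nonzero and which $H$-summands meet $I$, and separating ${\rm char}({\mathbb K})\neq 2$ from ${\rm char}({\mathbb K}) = 2$ (where $2\alpha_0 = 0$, i.e. $-\alpha_0 = \alpha_0$, alters the pattern), produces precisely alternatives (4)(i)--(iii) in characteristic $\neq 2$ and alternative (3) in characteristic $2$, with every configuration absent from the list forcing $I = {\frak L}$, i.e. alternative (1).

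In branch (B) one argues dually. Here ${\mathcal Z}_{Lie}({\frak L}) = 0$ together with (\ref{equhache}) forces ${\frak I}\cap H = 0$, so ${\frak I}$ and $I$ are sums of root spaces; $\neg{\frak I}$-connectivity of $\Lambda^{\frak I}$ and $|\Lambda^{\frak I}|\le 2$ reduce $\Lambda^{\frak I}$ to the roots $\pm\gamma$ for a single $\gamma$ (with its two gradings), and the propagation from the proof of Proposition \ref{propoI}, started from some ${\frak L}_{\beta_0,\bar i}\subset I$, again drags every ${\frak L}_{\beta,\bar j}$, $\beta\in\Lambda^{\frak I}\setminus\{\beta_0,-\beta_0\}$, into $I$. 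One is then left to decide which of the at most four spaces ${\frak L}_{\pm\gamma,\bar i},{\frak L}_{\pm\gamma,\bar i+\bar 1}$ lie in $I$; using $[{\frak I},{\frak L}] = 0$ (so being an ideal only constrains $[{\frak L},I]$) and the super-Leibniz identity to test which graded subspaces of ${\frak I}$ are ideals, one obtains alternatives (2)(i)--(ii) in characteristic $\neq 2$, while in characteristic $2$ the failure of $[x,x]$ to lie in ${\frak I}$ (equivalently $2\gamma = 0$, so $[{\frak L}_{\gamma,\bar k},{\frak L}_{\gamma,\bar l}]\subset H$) again yields alternative (3); if no proper graded subspace of ${\frak I}$ is an ideal we are back to alternative (1). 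In each produced alternative one finally verifies, by computing all brackets among the listed summands with $[{\frak I},{\frak L}] = 0$, Lemma \ref{lema1}, root-multiplicativity and maximal length, that $I$ is genuinely an ideal, $K$ genuinely a subalgebra, and that the displayed sum is ${\frak L}$.

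The main obstacle is not any single computation but the organisation of the case split: one must be sure that the configurations of nonzero graded root spaces ${\frak L}_{\pm\alpha,\bar k}$, of meeting patterns $[{\frak L}_{\epsilon\alpha,\bar k},{\frak L}_{-\epsilon\alpha,\bar l}]\cap I$, and of ${\rm char}({\mathbb K})$ admitting a proper ideal are \emph{exactly} those of (2)(i)--(ii), (3) and (4)(i)--(iii), with no gap and no overlap, and that in every other configuration the ideal is $\{0\}$, ${\frak I}$ or ${\frak L}$. A particularly delicate sub-point is the characteristic-$2$ phenomenon $-\alpha = \alpha$, which simultaneously merges root spaces and allows $[x,x]\notin{\frak I}$, and which must be tracked carefully to isolate alternative (3).
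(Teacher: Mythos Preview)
Your overall organisation matches the paper's: assume ${\frak L}$ is not simple, take a nonzero ideal $I \notin \{{\frak I}, {\frak L}\}$, split into the two branches $I \nsubseteq H + {\frak I}$ (your (A)) versus $I \subset {\frak I}$ (your (B)), and then run the propagation arguments from Propositions \ref{nueva12} and \ref{propoI} under the small-cardinality constraint. The paper further subdivides according to which of $|\Lambda^{\frak I}|$, $|\Lambda^{\neg{\frak I}}|$ is actually small, but the substance is the same.

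However, your handling of branch (B) in characteristic $2$ is wrong. You claim it ``again yields alternative (3)'', but alternative (3) has $I \supset {\frak L}_{\alpha,\bar i}$ for some $\alpha \in \Lambda^{\neg{\frak I}}$, which is incompatible with $I \subset {\frak I}$. What actually happens in branch (B) is this. First, $|\Lambda^{\frak I}| = 1$ is impossible: if $\Lambda^{\frak I} = \{\gamma\}$ and $0 \neq {\frak L}_{\gamma,\bar j} \subset I$, then ${\mathcal Z}_{Lie}({\frak L}) = 0$ produces some $\alpha \in \Lambda^{\neg{\frak I}}_{\bar k}$ with $[{\frak L}_{\gamma,\bar j},{\frak L}_{\alpha,\bar k}] + [{\frak L}_{\alpha,\bar k},{\frak L}_{\gamma,\bar j}] \neq 0$, so $\gamma + \alpha \in \Lambda^{\frak I} = \{\gamma\}$ and $\alpha = 0$, a contradiction. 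You omitted this step. Second, when $|\Lambda^{\frak I}| = 2$ with the two roots \emph{not} of the form $\{\gamma,-\gamma\}$, the Proposition \ref{propoI} propagation (run once from each root) already forces $I = {\frak I}$. Now in characteristic $2$ one has $-\gamma = \gamma$, so a two-element set $\{\gamma,-\gamma\}$ cannot occur; the only surviving case is $\Lambda^{\frak I} = \{\gamma,\delta\}$ with $\delta \neq -\gamma$, which gives $I = {\frak I}$. Hence branch (B) in characteristic $2$ contributes nothing to the list: alternative (2) arises only when ${\rm char}({\mathbb K}) \neq 2$, and alternative (3) comes exclusively from branch (A) with $|\Lambda^{\neg{\frak I}}| = 1$, which via Remark \ref{re3.1} is precisely what forces ${\rm char}({\mathbb K}) = 2$.
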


\begin{proof}
Now, suppose ${\frak L}$ is not  simple, then there exists a
nonzero ideal $I$ of ${\frak L}$ such that $I \notin \{{\frak I},
{\frak L} \}$. Let us distinguish three cases:

\smallskip

1. $|\Lambda^{{\frak I}}| \leq 2$ and $|\Lambda^{\neg {\frak
I}}|>2.$ If $I \subsetneq H+ {\frak I}$, we have as in Proposition
\ref{nueva12} that $I={\frak L}$. From here $I \subset H+ {\frak
I}.$ Since we get $I\cap H=0$ as in Theorem \ref{last} we have
$$I \subset {\frak I}.$$
Let us show  that necessarily
$$|\Lambda^{{\frak I}}| \neq 1.$$
Indeed, if $|\Lambda^{{\frak I}}| =1$, then $|\Lambda^{{\frak I}}|
=\{\gamma\}$ and so either $\gamma \in \Lambda^{\frak I}_{\bar 0}
\cup \Lambda^{\frak I}_{\bar 1}$ or $\gamma \in \Lambda^{\frak
I}_{\bar i}$ with $\gamma \notin \Lambda^{\frak I}_{\bar i+1}.$
We have as in Proposition \ref{propoI} that some $0 \neq {\frak
L}_{\gamma, \bar{j}} \subset I$, (in the second case $\bar{j}
=\bar{i}$). Since ${\mathcal Z}_{Lie}({\frak L})=0$, there exists
$\alpha \in \Lambda_{\bar{k}}^{\neg {\frak I}}$ such that $
[{\frak L}_{\gamma, \bar{j}}, {\frak L}_{\alpha, \bar{k}}]+ [
{\frak L}_{\alpha, \bar{k}},{\frak L}_{\gamma, \bar{j}}] \neq 0$
and so $\gamma+\alpha \in \Lambda_{\bar{j}+ \bar{k}}^{ {\frak
I}}.$ From here $\gamma + \alpha = \gamma$ and so $\alpha=0$, a
contradiction. Hence the possibility $|\Lambda^{{\frak I}}| =1$
never happens. From here, we just have to consider the case in
which $|\Lambda^{{\frak I}}| = 2.$
Then either $\Lambda^{{\frak
I}}=\{ \gamma, \delta\}$ with $\delta \neq -\gamma$ or
$\Lambda^{{\frak I}}=\{\gamma,-\gamma\}$ and ${\rm char }({\mathbb
K}) \neq 2$. In the first possibility we can argue as in
Proposition \ref{propoI} to get $I={\frak I}$ and so we just have
to center on the second one, that is, $\Lambda^{{\frak
I}}=\{\gamma,-\gamma\}$ and ${\rm char }({\mathbb K}) \neq 2$. As
in the case in which $|\Lambda^{{\frak I}}| =1$, we have that some
$0 \neq {\frak L}_{\epsilon \gamma, \bar{j}} \subset I$ with
$\epsilon \in \{1,-1\}$ and that there exists $\alpha \in
\Lambda_{\bar{k}}^{\neg {\frak I}}$ such that $ [{\frak
L}_{\epsilon \gamma, \bar{j}}, {\frak L}_{\alpha, \bar{k}}]+
[{\frak L}_{\alpha, \bar{k}},{\frak L}_{\epsilon \gamma, \bar{j}}]
\neq 0$ being then $\epsilon\gamma+\alpha \in \Lambda_{\bar{j}+
\bar{k}}^{ {\frak I}}.$ From here, either $\epsilon \gamma +
\alpha = \epsilon\gamma$ or $\epsilon \gamma + \alpha =
-\epsilon\gamma.$ Since the first possibility never happens we get
$\alpha=-2\epsilon \gamma$ and so $0\neq [{\frak L}_{\epsilon
\gamma, \bar{j}}, {\frak L}_{\alpha, \bar{k}}]+ [{\frak
L}_{\alpha, \bar{k}},{\frak L}_{\epsilon \gamma, \bar{j}}]={\frak
L}_{-\epsilon \gamma, \bar{j}+\bar{k}} \subset I$. Hence, if
$\epsilon \gamma \notin \Lambda^{ \frak I}_{\bar j + \bar 1}$ and
$-\epsilon \gamma \notin \Lambda^{ \frak I}_{\bar{j}+\bar{k} +
\bar{1}}$ we get $I={\frak I}$. From here, we just have to study
the opposite case in which it is straightforward to verify that we
necessarily  have  the option (2) of the Proposition.

\smallskip

2. $|\Lambda^{\neg {\frak I}}| \leq 2$ and $|\Lambda^{{\frak
I}}|>2.$ If $I \subset H+ {\frak I}$, we have as in item 1. that
$I \subset {\frak I}$ and then $I={\frak I}$ by Proposition
\ref{propoI}. From here, $$I \subsetneq H+ {\frak I}.$$ Since
 Equation (\ref{equsimple}) gives us
$|\Lambda^{\neg {\frak I}}| \neq 0$, (in the opposite case $H=0$),
we get $|\Lambda^{\neg {\frak I}}| \in \{1,2\}$ and so we can
distinguish two possibilities:
\begin{itemize}
\item If $|\Lambda^{\neg {\frak I}}| =1$, Equation
(\ref{equsimple})  implies that necessarily ${\rm char}({\mathbb
K}) =2$. By taking also into account  Lemma \ref{lema5}, the fact
$\alpha(H_{\bar 0}) \neq 0$ for any nonzero root $\alpha$; and
that ${\mathcal Z_{Lie}}(\frak L) =0$ we get in a straightforward
way the case (3) of the Proposition.

\item If $|\Lambda^{\neg {\frak I}}| =2$,  then either
$\Lambda^{\neg {\frak I}}=\{ \alpha, \beta\}$ with $\beta \neq
-\alpha$ or $\Lambda^{\neg{\frak I}}=\{\alpha,-\alpha\}$ and ${\rm
char }({\mathbb K}) \neq 2$. In the first possibility we have as
in Proposition \ref{nueva12} that $I={\frak L}$ and so we just
have to consider the second one, that is, $\Lambda^{\neg {\frak
I}}=\{\alpha,-\alpha\}$ and ${\rm char}({\mathbb K}) \neq 2$. By a
routinary case by case study on ${\rm
dim}(\bigoplus\limits_{\gamma \in \Lambda^{\neg{\frak I}}} {\frak
L}_{\gamma})\in \{2,3,4\}$, taking into account Equation
(\ref{equsimple}), Lemma \ref{lema5}, the fact $\alpha(H_{\bar 0})
\neq 0$ for any nonzero root $\alpha$ and that ${\mathcal
Z_{Lie}}(\frak L) =0$, we obtain the possibility  (4) of the
Proposition.
\end{itemize}

\smallskip

3.  $|\Lambda^{{\frak I}}| \leq 2$ and $|\Lambda^{\neg {\frak I}}|
\leq 2.$ If $I \subset H + {\frak I}$ we have as in item 1. that
$I \subset {\frak I}$ and $|\Lambda^{\frak I}| \neq 1$, being then
$|\Lambda^{\frak I}| =2$. By arguing as in item 1. we get that
either $I={\frak I}$ or we are in the case (2) of the Proposition.

If $I \nsubseteq H + {\frak I}$, by arguing as in item 2. we
obtain that necessarily  either possibility (3) or possiblity (4)
of the Proposition holds.
\end{proof}

\medskip

\end{document}